\pgfplotsset{compat=1.18}
\newtheorem{theorem}{Theorem}
\newtheorem{lemma}{Lemma}
\newtheorem{proposition}{Proposition}
\newtheorem{remark}{Remark}
\DeclareSIUnit\mmhg{mmHg}
    \crefname{proposition}{Proposition}{Propositions} 
    \Crefname{proposition}{Proposition}{Propositions}
    \crefname{lemma}{Lemma}{Lemmas} 
    \crefname{corollary}{Corollary}{Corollaries} 
\newcommand{\apost}{\emph{a posteriori}}
\newcommand{\Apost}{\emph{A posteriori}}
\newcommand{\PP}{{\rm el}}
\newcommand{\FF}{{\rm f}}
\let\vec\undefined
\DeclareRobustCommand{\vec}[1]{{\pmb{#1}}}
  \renewcommand{\vec}[1]{#1}%
\newcommand{\dimens}{{\mathfrak D}}
\newcommand{\jj}{{\rm j}}
\newcommand{\kk}{{\rm k}}
\newcommand{\EE}{{\rm E}}
\newcommand{\II}{{\rm I}}
\newcommand{\test}[1]{{
        \widecheck{#1}}}
\newcommand{\mesh}[1]{{\mathscr{#1}}}
\newcommand{\elem}{{K}}
\newcommand{\face}{{F}}
\let\tmpsigma\sigma
\let\sigma\undefined
\newcommand{\sigel}{\tmpsigma_\PP}
\let\tmptau\tau
\let\tau\undefined
\newcommand{\tauf}{\tmptau_\FF}
\newcommand{\Div}[1]{{\rm div\,}#1}
\newcommand{\jumpl}{[\![}
\newcommand{\jumpr}{]\!]}
\newcommand{\Jumpl}{\left[\!\!\left[}
\newcommand{\Jumpr}{\right]\!\!\right]}
\newcommand{\jump}[1]{\jumpl#1\jumpr}
\newcommand{\Jump}[1]{\Jumpl#1\Jumpr}
\newcommand{\pedchn}[1]{%
    \IfSubStr{#1}{K}%
    {\StrBefore{#1}{K}[\tempA]%
        {\pedchn{\tempA},K}}%
    {\IfEqCase{#1}{%
        {c}{}%
        {h}{{ht}}%
        {n}{h}%
        {npp}{h}%
        {nmm}{h}
    }[\PackageError{pedchn}{Undefined option to pedchn: #1}{}]%
    }
}%
\newcommand{\apchn}[1]{%
    \IfSubStr{#1}{K}%
    {\StrBefore{#1}{K}[\tempA]%
        {\apchn{\tempA}}}%
    {\IfEqCase{#1}{%
        {0}{0}%
        {c}{}%
        {h}{}%
        {n}{n}%
        {npp}{{n+1}}%
        {nmm}{{n-1}}
    }[\PackageError{apchn}{Undefined option to apchn: #1}{}]}%
}%
\newcommand{\chn}[3][]{
    \IfStrEq{#1}{}{%
    #3^{\apchn{#2}}_{\pedchn{#2}}}%
    {   \IfSubStr{#1}{,}{%
        #3^{\apchn{#2}}_{#1\pedchn{#2}}}%
        {\chn[#1,]{#2}{#3}}
    }
}
\newcommand{\discrdt}[1]{    
    \delta_t^{\apchn{#1}}
}
\newcommand{\spHd}{{V_\vec{d}}}
\newcommand{\spLd}{{L_\vec{d}}}
\newcommand{\spHu}{{V_\vec{u}}}
\newcommand{\spLu}{{L_\vec{u}}}
\newcommand{\spHJ}{{V_J}}
\newcommand{\spHjj}{{V_\jj}}
\newcommand{\spLJ}{{L_J}}
\newcommand{\spLjj}{{L_\jj}}
\newcommand{\spLp}{{L_p}}
\newcommand{\spHstar}{{V_\star}}
\newcommand{\hspHd}{V_{\vec{d},h}}
\newcommand{\hspLd}{L_{\vec{d},h}}
\newcommand{\hspHu}{V_{\vec{u},h}}
\newcommand{\hspLu}{L_{\vec{u},h}}
\newcommand{\hspHJ}{V_{J,h}}
\newcommand{\hspHjj}{V_{\jj,h}}
\newcommand{\hspLJ}{L_{J,h}}
\newcommand{\hspLp}{L_{p,h}}
\newcommand{\hspHstar}{{V_{\star,h}}}
\newcommand{\DUALspHd}{{V'_\vec{d}}}
\newcommand{\DUALspHu}{{V'_\vec{u}}}
\newcommand{\DUALspHJ}{{V'_J}}
\newcommand{\DUALspLp}{{L'_p}}
\newcommand{\DUALspHstar}{{V'_\star}}
\newcommand{\bochner}[3]{{L^{#1}(0,#2; #3)}}
\newcommand{\bochnerH}[3]{{H^{#1}(0,#2; #3)}}
\newcommand{\Linf}[2]{%
    \IfEqCase{#2}{%
        {T}{\bochner{\infty}{T}{#1}}%
    }[\bochner{\infty}{t^{#2}}{#1}]%
}
\newcommand{\Ltwo}[2]{%
    \IfEqCase{#2}{%
        {T}{\bochner{2}{T}{#1}}%
    }[\bochner{2}{t^{#2}}{#1}]%
}
\newcommand{\Lone}[2]{%
    \IfEqCase{#2}{%
        {T}{\bochner{1}{T}{#1}}%
    }[\bochner{1}{t^{#2}}{#1}]%
}
\newcommand{\Hone}[2]{%
    \IfEqCase{#2}{%
        {T}{\bochnerH{1}{T}{#1}}%
    }[\bochnerH{1}{t^{#2}}{#1}]%
}
\newcommand{\interp}[2]{
    {I_{#1,h}{#2}}
}
\newcommand{\errint}[2]{
    {#2 - \interp{#1}{#2}}
}
\newcommand{\GG}{{\mathcal{G}}}
\newcommand{\LLstokes}{{\mathcal{L}}}
\newcommand{\err}[2]{%
    \IfStrEq{#1}{0}{%
    e_{#2}^{0}}%
    {   \IfStrEq{#1}{*}{%
            e_{#2}^{*}}%
        {   \IfSubStr{#1}{*}{%
                \StrBehind{#1}{*}[\tempB]%
                e_{#2}^{*,\apchn{\tempB}}}%
            {   e_{#2}^{\apchn{#1}}%
            }
        }
    }
}
\newcommand{\ERRnoR}{{\text{ERR}_e}}
\newcommand{\RR}[3][]{
    \IfStrEq{#1}{}{%
    R^{\apchn{#2}}_{\pedchn{#2},#3}}%
    {   \IfSubStr{#1}{,}{%
        R^{\apchn{#2}}_{#1\pedchn{#2},#3}}%
        {\RR[#1,]{#2}{#3}}
    }
}
\newcommand{\faceRR}[3][]{
    \IfStrEq{#1}{}{%
    S^{\apchn{#2}}_{\pedchn{#2},#3}}%
    {   \IfSubStr{#1}{,}{%
        S^{\apchn{#2}}_{#1\pedchn{#2},#3}}%
        {\faceRR[#1,]{#2}{#3}}
    }
}
\newcommand{\interffaceRR}[3][]{
    \IfStrEq{#1}{}{%
    {S^{\apchn{#2}}_{\Sigma,\pedchn{#2},#3}}}%
    {   \IfSubStr{#1}{,}{%
        {S^{\apchn{#2}}_{\Sigma,#1\pedchn{#2},#3}}}%
        {\interffaceRR[#1,]{#2}{#3}}
    }
}
\newcommand{\PREinterffaceRR}[3][]{
    \overline{\interffaceRR[#1]{#2}{#3}}
}
\newcommand{\errRR}[3][]{
    \IfStrEq{#1}{}{%
    \mathscr{R}^{\apchn{#2}}_{\pedchn{#2}#3}}%
    {   \IfSubStr{#1}{,}{%
        \mathscr{R}^{\apchn{#2}}_{#1\pedchn{#2}#3}}%
        {\RR[#1,]{#2}{#3}}
    }
}
\newcommand{\estim}{{\mathcal{E}}}
\newcommand{\esttime}[1]{
    \IfStrEq{#1}{T}{%
        {\esttime{N_T}}}%
    {\estim_{\rm time}^{#1}}%
}
\newcommand{\estdata}[1]{
    \IfStrEq{#1}{T}{%
        {\estdata{N_T}}}%
    {\estim_{\rm data}^{#1}}%
}
\newcommand{\tildeestdata}[1]{
    \IfStrEq{#1}{T}{%
        {\tildeestdata{N_T}}}%
    {\widetilde{\estim}_{\rm data}^{#1}}%
}
\newcommand{\estloc}[1]{
    \IfStrEq{#1}{T}{%
        {\estloc{N_T}}}%
    {\widehat{\estim}^{#1}}%
}
\newcommand{\estpre}[1]{
    \IfStrEq{#1}{T}{%
        {\estpre{N_T}}}%
    {\estim_{\GG}^{#1}}%
}
\newcommand{\eststokes}[1]{
    \IfStrEq{#1}{T}{%
        {\eststokes{N_T}}}%
    {\estim_{\LLstokes}^{#1}}%
}
\newcommand{\estok}[1]{
    \IfStrEq{#1}{T}{%
        {\estok{N_T}}}%
    {\estim_{\rm spc}^{#1}}%
}
\title{A posteriori error analysis for a coupled Stokes-poroelastic system with multiple compartments}
\author{Ivan Fumagalli, Nicola Parolini, Marco Verani
}
\date{}
\affil{\small MOX, Dipartimento di Matematica, Politecnico di Milano,\\
              piazza Leonardo da Vinci 32, 20133, Milan, Italy}
\begin{document}

\maketitle

\begin{abstract}
\noindent The discretization of fluid-poromechanics systems is typically highly demanding in terms of computational effort.
This is particularly true for models of multiphysics flows in the brain, due to the geometrical complexity of the cerebral anatomy -- requiring a very fine computational mesh for finite element discretization -- and to the high number of variables involved.
Indeed, this kind of problems can be modeled by a coupled system encompassing the Stokes equations for the cerebrospinal fluid in the brain ventricles and Multiple-network Poro-Elasticity (MPE) equations describing the brain tissue, the interstitial fluid, and the blood vascular networks at different space scales.
The present work aims to rigorously derive \apost{} error estimates for the coupled Stokes-MPE problem, as a first step towards the design of adaptive refinement strategies or reduced order models to decrease the computational demand of the problem.
Through numerical experiments, we verify the reliability and optimal efficiency of the proposed \apost{} estimator and identify the role of the different solution variables in its composition.
\end{abstract}

\section{Introduction}\label{intro}

The numerical modeling of multiphysics flows in the human brain poses several difficulties, due to the complexity of the brain's geometry and the computational cost of handling several coupled physical systems.
This modeling is of paramount importance in the investigation of the Cerebrospinal Fluid (CSF), whose main functions are to wash out the waste products of cerebral activity and protect the brain from impact with the skull \cite{glymphatic2,sakka2011anatomy}.
The CSF is generated in the cerebral tissue by mass exchange through the walls of capillary blood vessels and permeates the whole organ in its interstitial space: the interaction between these fluid networks and the elastic tissue can be modeled by Multiple-network Poro-Elasticity (MPE) equations \cite{chou2016fully,corti2023numerical,guo2018subject,lee2019mixed}.
This system is then coupled with the CSF flowing in the hollow cavities of the cerebral ventricles and the subarachnoid spaces, where the CSF flow can be modeled by Stokes equations \cite{fumagalli2024polytopal,gholampour2023mathematical,linninger2016cerebrospinal}.

The large number of variables encompassed by 
fluid\nobreakdash-poromechanics
models and the geometrical complexity of the brain and fluid-tissue interface make the numerical simulation of the problem particularly demanding.
To reduce the computational effort, different strategies can be considered:
adaptive refinement allows for retaining geometric accuracy while decreasing the computational demands, while reduced-order models provide an efficient means to approximate the solution of complex problems for different values of the model parameters \cite{hesthaven2016certified,plewa2005adaptive,quarteroni2015reduced,verfurth1994posteriori}.
Both these strategies are classically based on \apost{} error estimates, which have been derived for several single-physics problems, including single-fluid Biot equations \cite{ahmed2019adaptive,li2022residual,riedlbeck2017stress} or (Navier-)Stokes equations \cite{ainsworth1997posteriori,bank1991posteriori,hannukainen2012unified,verfurth1989posteriori}.
An \apost{} analysis of coupled Biot-Stokes system has been carried out in \cite{babuvska2010residual,houedanou2022posterioriStokesBiotNonconforming,houedanou2021posterioriStokesBiotLagrange}, but the case of multiple interacting fluids is scarcely covered by the \apost{} literature: the MPE problem alone has been addressed only in \cite{nordbotten2010posteriori} for a particular case and in the recent work \cite{eliseussen2023posteriori}, but the coupled MPE-Stokes problem with time-dependent pressure equations seems to be missing.

The present work aims at filling this gap, providing rigorous \apost{} estimates for the coupled MPE-Stokes problem with time-dependent pressure equations.
Specifically, we provide reliable residual-based estimators in the abstract framework of \cite{ern2009posteriori}, enhanced to account for multi-domain problems.
Moreover, through numerical experiments, we analyze the efficiency of these estimators and assess their main components.

Starting from \cref{sec:model}, we introduce the MPE-Stokes problem and its discretization by finite elements in space and the implicit Euler scheme in time.
Then, \cref{sec:apost} is devoted to the derivation of \apost{} error estimates for the solution to the problem.
In \cref{sec:results} we analyze the reliability and efficiency of the estimators and discuss the relevance of their main components.

\section{The coupled Stokes-MPE system}\label{sec:model}

\begin{figure}
    \centering
    \includegraphics[width=0.5\textwidth]{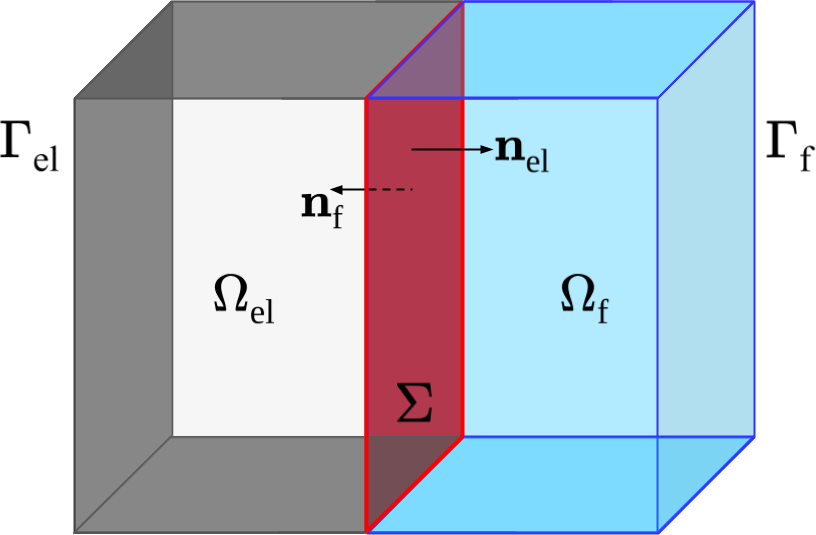}
    \caption{Domain scheme: poroelastic domain $\Omega_\PP$ (light grey), Stokes' domain $\Omega_\FF$ (blue), interface $\Sigma$ (red), and external boundaries $\Gamma_\PP=\Gamma_{\text{D},\vec{d}}\cup\Gamma_{\text{N},\vec{d}}=\Gamma_{\text{D},J}\cup\Gamma_{\text{N},J}$ and $\Gamma_\FF=\Gamma_{\text{D},\vec{u}}\cup\Gamma_{\text{N},\vec{u}}$.}
    \label{fig:domain}
\end{figure}

Let us consider a polyhedral $\dimens$-dimensional domain $\Omega\subset\mathbb R^\dimens$ ($\dimens=2,3$), schematically represented in \cref{fig:domain}, partitioned into a poroelastic region $\Omega_\PP$ and a fluid region $\Omega_\FF$ by an interface $\Sigma=\partial\Omega_\PP\cap\partial\Omega_\FF$ composed of a finite number of flat polygons.
Stokes equations are set in $\Omega_\FF$, with $\vec{u}$ and $p$ denoting the fluid's velocity and pressure, while $\Omega_\PP$ is filled with a linear poroelastic medium subject to the MPE equations, with solid displacement $\vec{d}$ and network pressures $p_\jj\in J$, where $J$ is a set of $\#J$ indices.
The external boundaries $\Gamma_\PP=\partial\Omega_\PP\setminus\Sigma, \Gamma_\FF=\partial\Omega_\FF\setminus\Sigma$ are partitioned in Dirichlet and Neumann portions for the different variables, with clear notation:
$\Gamma_\PP = \Gamma_{\text{D},\vec{d}}\cup\Gamma_{\text{N},\vec{d}}=\Gamma_{\text{D},J}\cup\Gamma_{\text{N},J}, \Gamma_\FF=\Gamma_{\text{D},\vec{u}}\cup\Gamma_{\text{N},\vec{u}}$;
notice that, for simplicity, we consider the same Dirichlet/Neumann splitting of the boundaries for all fluid networks $\jj\in J$.
We consider the Stokes and linear elasticity equations to be steady, while the time dependence of the porous flow is accounted for in the porous fluid momentum equations, as follows:
\begin{subnumcases}{\label{eq:NSMPE}}
    - \nabla\cdot\sigel(\vec{d}) + \sum_{\kk\in J}\alpha_\kk\nabla p_\kk = \vec{f}_\PP,
    &$ 
    \text{in } \Omega_\PP\times(0,T],$ \label{eq:elasticity}\\
    c_\jj\partial_t p_\jj+\nabla\cdot\left(\alpha_\jj\partial_t\vec{d}-\frac{\kappa_\jj}{\mu_\jj}\nabla p_\jj\right)
    \\\qquad
    + \sum_{\kk\in J}\beta_{\jj\kk}(p_\jj-p_\kk) + \beta_\jj^\text{e}p_\jj = g_\jj,
    &$ 
    \text{in } \Omega_\PP\times(0,T],\quad\forall\jj\in J,$ \label{eq:pj}\\
    - \nabla\cdot\tauf(\vec{u}) + \nabla p = \vec{f}_\FF,
    &$ 
    \text{in } \Omega_\FF\times(0,T],$ \label{eq:fluidMom}\\
    \nabla\cdot\vec{u} = 0,
    &$ 
    \text{in } \Omega_\FF\times(0,T],$ \label{eq:fluidCont}\\
    \vec{d}=\vec{d}_0, \quad p_\jj=p_{\jj,0},
    &$ 
    \text{in } \Omega_\PP\times\{0\},\quad\forall\jj\in J,$ \label{eq:ICel}\\
    \vec{u}=\vec{u}_0,
    &$ 
    \text{in } \Omega_\FF\times\{0\},$ \label{eq:ICf}\\
    \star = \vec{0},
    &$ 
    \text{on } \Gamma_{\text{D},\star}\times(0,T],\text{ for }\star\in\{\vec{d},\vec{u}\}$ \label{eq:dirichletDU}\\
    p_\jj = p_{\jj0},
    &$ 
    \text{on } \Gamma_{\text{D},J}\times(0,T], \quad \forall\jj\in J$ \label{eq:dirichletJ}\\
    \sigel(\vec{d})\vec{n}+\sum_{\kk\in J}\alpha_\kk p_\kk\vec{n} = 0
    &$ 
    \text{on } \Gamma_{\text{N},\vec{d}}\times(0,T],$ \label{eq:neumannD}\\
    \frac{\kappa_\jj}{\mu_\jj}\nabla p_\jj\vec{n} = 0
    &$ 
    \text{on } \Gamma_{\text{N},J}\times(0,T], \quad \forall\jj\in J$ \label{eq:neumannJ}\\
    \tauf(\vec{u})\vec{n}+p\vec{n} = 0
    &$ 
    \text{on } \Gamma_{\text{N},\vec{u}}\times(0,T],$ \label{eq:neumannU}\\
    \text{interface conditions (see \eqref{eq:interf} below)},
    &$ 
    \text{on } \Sigma\times(0,T],$
\end{subnumcases}
where $\sigel(\vec{d})=2\mu_\PP\varepsilon(\vec{d}) + \lambda\Div{\vec{d}}I$ is the Cauchy stress tensor of the elastic medium and $\tauf(\vec{u})=2\mu_\FF\varepsilon(\vec{u})$ is the viscous stress tensor of the fluid -- with $\varepsilon(\vec{\phi})=\frac{1}{2}(\nabla\vec{\phi}+\nabla\vec{\phi}^T)$.
We assume that, among all the fluid networks indexed in $J$, only one exchanges mass at the interface $\Sigma$ with the Stokes domain $\Omega_\FF$ and we denote it by $\EE$: the others exchange mass only among themselves and with $\EE$ (cf.~the terms with $\beta_{\jj\kk}$ in \eqref{eq:pj} and \cref{rmrk:brain} below).
Accordingly, the following interface conditions are imposed on $\Sigma$:
\begin{subnumcases}{\label{eq:interf}}
    \sigel(\vec{d})\vec{n}_\PP - \sum_{\kk\in J} \alpha_\kk p_\kk\vec{n}_\PP + \tauf(\vec{u})\vec{n}_\FF - p \vec{n}_\FF = \vec{0}, 
    &$ 
    \text{on } \Sigma\times(0,T],$
    \label{eq:BCtotalstress}\\
    p_\EE = p - \tauf(\vec{u})\vec{n}_\FF\cdot\vec{n}_\FF, 
    &$ 
    \text{on } \Sigma\times(0,T],$\label{eq:BCnormalstress}\\
    \frac{\kappa_\jj}{\mu_\jj}\nabla p_\jj\cdot\vec{n}_\PP = 0,
    \quad\forall\jj\in J\setminus\{\EE\},
    &$ 
    \text{on } \Sigma\times(0,T],
    ,$\label{eq:BCpnonE}\\
    \vec{u}\cdot\vec{n}_\FF + \left(
    \partial_t\vec{d}
    -\frac{\kappa_\EE}{\mu_\EE}\nabla p_\EE\right)\cdot\vec{n}_\PP = 0,
    &$ 
    \text{on } \Sigma\times(0,T],$\label{eq:BCnormalflux}\\
    \left(\tauf(\vec{u})\vec{n}_\FF
    \right)_{\parallel} = 
    \vec{0},
    &$ 
    \text{on } \Sigma\times(0,T],$\label{eq:BCtgstress}
\end{subnumcases}
where $(\vec{\phi})_\parallel=\vec{\phi}-(\vec{\phi}\cdot\vec{n}_\FF)\vec{n}_\FF$ is the tangential component of $\vec{\phi}$ along $\Sigma$.

\begin{remark}[Specifics of brain modeling]\label{rmrk:brain}
In the case of brain multiphysics flow, the fluid networks index set $J=\{\text{A},\text{V},\text{C},\EE\}$ contains the arterial (A), venous (V) and capillary (C) blood flow and the extracellular/interstitial cerebrospinal fluid ($\EE$).
Due to the brain-blood barrier \cite{abbott2010structure}, no direct flow of the blood compartments A, V, C occurs through the interface $\Sigma$: only the interstitial CSF pours into the CSF domain $\Omega_\FF$.
Regarding the choice of considering a quasi-static approximation for the elasticity and Stokes equations in \eqref{eq:NSMPE}, the resulting system can be employed to study different conditions:
\begin{itemize}
    \item the development of pathologies like hydrocephalus, with time scales of days or weeks that are significantly longer than the tissue relaxation times \cite{tully2011cerebral};
    \item the pulsatile CSF flow in the scale of seconds, if a slight overestimation of the ventricular wall displacements and intracranial pressure is admissible \cite{chou2016fully}.
\end{itemize}
\end{remark}

\subsection{Variational formulation}\label{sec:weak}

We introduce the following Sobolev spaces, with $\jj\in J$:
\begin{equation}\label{eq:spaces}\begin{aligned}
    \spHjj &= H^1_{\Gamma_{\text{D},J}}(\Omega_\PP)
    &
    \spHd &= [H^1_{\Gamma_{\text{D},\vec{d}}}(\Omega_\PP)]^\dimens, &
    \spHu &= [H^1_{\Gamma_{\text{D},\vec{u}}}(\Omega_\FF)]^\dimens, \\
    \spLjj &= L^2(\Omega_\PP),
    &
    \spLd &= [L^2(\Omega_\PP)]^\dimens, &
    \spLu &= [L^2(\Omega_\FF)]^\dimens,\\
    \spHJ &= [\spHjj]^{\#J}, &
    \spLJ & =[\spLjj]^{\#J}, &
    \spLp &= L^2(\Omega_\FF).
\end{aligned}\end{equation}
To simplify the notation, we employ the notation $\vec{p}_J=[p_\jj]_{\jj\in J}$ to indicate the elements of $\spHJ$ and $\spLJ$: these functions are vector fields of dimension $\#J$ having $p_\EE$ as their last component.
To account for the time dependence of the system variables, we set up our problem in the Bochner spaces $\Hone{\mathscr{V}}{T}$, where $\mathscr{V}$ is any Hilbert space introduced in \eqref{eq:spaces}.
With analogous notation, we will also consider the spaces $\Ltwo{\mathscr{V}}{T}, \Lone{\mathscr{V}}{T}, \Linf{\mathscr{V}}{T}$.

The variational formulation of problem \eqref{eq:NSMPE} reads as follows:
\\
Find $(\vec{d},\vec{p}_J,\vec{u},p)\in \Hone{\spHd\times\spHJ\times\spHu\times\spLp}{T}$ 
such that, for a.e. $t\in[0,T]$,
\begin{subnumcases}{\label{eq:weak}}
    a_\PP(\vec{d},\test{\vec{d}}) + b_J(\vec{p}_J,\test{\vec{d}})
    + \mathfrak J_\PP(p_\EE,\test{\vec{d}}) = F_\PP(\test{\vec{d}})
    &\\
    \begin{aligned}
        &m_J(\partial_t \vec{p}_J,\test{\vec{p}}_J)
    + \widetilde{a}_J(\vec{p}_J,\test{\vec{p}}_J)
    - b_J(\test{\vec{p}}_J,\partial_t\vec{d})
    \\&\qquad\qquad
    - \mathfrak J_\PP(\test{p}_\EE,\partial_t\vec{d}) - \mathfrak J_\FF(\test{p}_\EE,\vec{u})
    = F_J(\test{\vec{p}}_J)
    \end{aligned}
    &\\
    a_\FF(\vec{u},\test{\vec{v}})
    + b_\FF(p,\test{\vec{u}})
    + \mathfrak J_\FF(\test{p}_\EE,\vec{u})
    = F_\FF(\test{\vec{v}})
        \label{eq:weak-stokesMom}&\\
    b_\FF(\test{p},\vec{u}) = 0
        \label{eq:weak-stokesCont}&
\end{subnumcases}
for all $\test{\vec{d}}\in\spHd, \test{\vec{p}}_J\in\spHJ, \test{\vec{u}}\in\spHu, \test{p}\in\spLp$,
where
\begin{equation*}
\begin{gathered}
a_\PP(\vec{d},\test{\vec{d}}) =
    (\sigel(\vec{d}),\varepsilon(\test{\vec{d}}))_{\Omega_\PP},
\qquad\qquad
a_\FF(\vec{u},\vec{v}) =
    (\tauf(\vec{u}),\varepsilon(\test{\vec{u}}))_{\Omega_\FF},
\\
m_J(\partial_t\vec{p}_J,\test{\vec{p}}_J) =
    \sum_{\jj\in J}\left(c_\jj \partial_t p_\jj, \test{p}_\jj\right)_{\Omega_\PP},
\\
\widetilde{a}_J(\vec{p}_J,\test{\vec{p}}_J) = 
    \sum_{\jj\in J}
    \left[
    \left(\frac{\kappa_\jj}{\mu_\jj}\nabla p_\jj,\nabla \test{p}_\jj\right)_{\Omega_\PP}
    +
    (\beta_{\jj}^\text{e}p_\jj, \test{p}_\jj)_{\Omega_\PP} + \sum_{\kk\in J}(\beta_{\kk\jj}(p_\jj-p_\kk), \test{p}_\jj)_{\Omega_\PP}\right],
\\
b_J(\test{\vec{p}}_J,\test{\vec{d}}) = 
    -\sum_{\jj\in J}
    (\alpha_\jj \test{p}_\jj,\Div\test{\vec{d}})_{\Omega_\PP},
\qquad\qquad
b_\FF(\test{p},\test{\vec{u}}) =
    -(\test{p},\Div\test{\vec{u}})_{\Omega_\FF},
\\
\mathfrak J_\star(\test{p}_\EE,\vec{\phi}) =
    \int_\Sigma \test{p}_\EE\vec{\phi}\cdot\vec{n}_\star,
    \quad \star\in\{\PP,\FF\},\\
F_\PP(\test{\vec{d}}) = (\vec{f}_\PP,\test{\vec{d}})_{\Omega_\PP},
\qquad
G_J(\test{\vec{p}}_J) = \sum_{\jj\in J}(g_\jj,\test{p}_\jj)_{\Omega_\PP},
\qquad
F_\FF(\test{\vec{u}}) = (\vec{f}_\FF,\test{\vec{u}})_{\Omega_\FF}.
\end{gathered}\end{equation*}
In these definitions, $(\cdot,\cdot)_D$ denotes the $L^2$ product over a domain $D$.
Analogously, $\|\cdot\|_D$ will denote the norm of $L^2(D)$.


\subsection{Time and space discretization}

We introduce a simplicial mesh $\mesh{T}$ partitioning the whole domain $\Omega$, and we split it into two submeshes $\mesh{T}_\PP,\mesh{T}_\FF$, corresponding to the subdomains $\Omega_\PP,\Omega_\FF$ and conforming with the interface $\Sigma$.
The sets of codimension-1 internal facets are denoted by $\mesh{F}_\PP^I,\mesh{F}_\FF^I$ (triangles for $\dimens=3$, line segments for $\dimens=2$).
Analogously, we denote by $\mesh{F}_\Sigma$ the facets lying on the interface $\Sigma$ and  by $\mesh{F}_{\text{D},\phi}$ the facets lying on the corresponding Dirichlet boundary $\Gamma_{\text{D},\phi}, \phi\in\{\vec{d},\vec{p}_J,\vec{u}\}$.
In the following, we will denote by $h_\elem$ the characteristic size of an element $\elem\in\mesh{T}$.

On this partitioning, we introduce the following conforming finite element spaces:
\begin{equation}\label{eq:fe}
\begin{gathered}
X_\star^r=\{\phi_h\in C^0(\overline{\Omega}_\star) \colon \phi_h|_\elem\in \mathbb P^r(\elem)\ \forall \elem\in\mesh{T}\}, \quad \star\in\{\PP,\FF\},\\
\hspHd = \spHd\cap [X_\PP^{s+1}]^\dimens, \qquad
\hspHJ = \spHJ\cap [X_\PP^{s+1}]^{\#J}, \\
\hspLd = \spLd\cap [X_\PP^{s+1}]^\dimens, \qquad
\hspLJ = \spLJ\cap [X_\PP^{s+1}]^{\#J}, \\
\hspHu = \spHu\cap [X_\PP^{s+1}]^\dimens, \qquad
\hspLu = \spLu\cap [X_\PP^{s+1}]^\dimens, \qquad
\hspLp = \spLp\cap X_\PP^s.
\end{gathered}
\end{equation}

We denote by $\interp{\vec{d}}{}\colon\spHd\to\hspHd,\interp{J}{}\colon\spHJ\to\hspHJ,\interp{\vec{u}}\colon\spHu\to\hspHu$ suitable interpolation operators onto the discrete spaces introduced above, such that the following interpolation estimate holds:
\begin{equation}\label{eq:estInterp}
\sum_{\elem\in\mesh{T}}\left[h_\elem^{-2}\|\test{\vec{d}}-\interp{\vec{d}}{\test{\vec{d}}}\|_\elem^2 + h_\elem^{-1}\|\test{\vec{d}}-\interp{\vec{d}}{\test{\vec{d}}}\|_{\partial\elem}^2\right] \lesssim \|\test{\vec{d}}\|_\spHd^2 \qquad \forall \test{\vec{d}}\in\spHd,
\end{equation}
and analogous ones for $\interp{J}{},\interp{\vec{u}}{}$.
For example, Cl\'ement interpolators can be employed \cite{ern2009posteriori}.
We anticipate that no interpolation operator is needed over the Stokes pressure space $\spLp$.

Regarding time discretization, we consider an implicit Euler scheme on a uniform time grid made of $N_T$ subintervals $I^{\apchn{n}}=[t^{\apchn{nmm}},t^{\apchn{n}}], \apchn{n}=1,\ldots,N_T,$ of length $\Delta t$, with $t^0=0, N_T=T/{\Delta t}$.
All the \apost{} results presented hereafter can be easily extended to nonuniform time discretization, as done, e.g., in \cite{ern2009posteriori,eliseussen2023posteriori,verfurth1989posteriori}.

In the following, for any function $\chn{c}{\phi}(t,\vec{x})$ we will use the notation $\chn{n}{\phi}$ to indicate its full space-time discretization evaluated at time $t^{\apchn{n}}$, whereas $\chn{h}{\phi}$ will denote the continuous piecewise linear (in time) function such that $\chn{h}{\phi}|_{t=t^{\apchn{n}}}=\chn{n}{\phi}$.
We also introduce the projector $\pi^0$ onto piecewise constant functions in time, such that $\pi^0\chn{h}{\phi}|_{t\in(t^{\apchn{nmm}},t^{\apchn{n}}]}=\chn{n}{\phi}$.

Therefore, the fully discrete problem approximating \eqref{eq:weak} reads as follows:
\\
Find $(\chn{h}{\vec{d}},\chn[J]{h}{\vec{p}},\chn{h}{\vec{u}},\chn{h}{p})\in C^0([0,T]; \spHd\times\spHJ\times\spHu\times\spLp)$, piecewise linear in time, 
such that, for a.e. $t\in[0,T]$,
\begin{subnumcases}{\label{eq:discr}}
    a_\PP(\chn{h}{\vec{d}},\test{\vec{d}}_h) + b_J(\chn[J]{h}{\vec{p}},\test{\vec{d}}_h)
    + \mathfrak{J}_{\PP}(\chn[\EE]{h}{p},\test{\vec{d}}_h) = (
    \chn[\vec{d}]{h}{\vec{f}}
    ,\test{\vec{d}}_h)_{\Omega_{\PP}}
    \\
    m_J(\partial_t \chn[J]{h}{\vec{p}},\test{\vec{p}}_{J,h})
    + \widetilde{a}_J(\pi^0\chn[J]{h}{\vec{p}},\test{\vec{p}}_{J,h})
    - b_J(\test{\vec{p}}_{J,h},\partial_t\chn{h}{\vec{d}})\\
    \qquad\qquad- \mathfrak J_\PP(\test{p}_{\EE,h},\partial_t\chn{h}{\vec{d}}) - \mathfrak J_\FF(\test{p}_{\EE,h},\chn{h}{\vec{u}})
    = (\pi^0\chn{h}{\vec{g}},\test{\vec{p}}_{J,h})_{\Omega_\PP}
    \\
    a_\FF(\chn{h}{\vec{u}},\test{\vec{u}}_h)
    + b_\FF(\chn{h}{p},\test{\vec{u}}_h)
    + \mathfrak J_\FF(\chn[\EE]{h}{p},\test{\vec{u}}_h)
    = (
    \chn[\vec{u}]{h}{\vec{f}}
    ,\test{\vec{u}}_h)_{\Omega_\FF}
    \\
    b_\FF(\test{p}_h,\chn{h}{\vec{u}}) = 0
\end{subnumcases}
for all $\test{\vec{d}}_h\in\hspHd, \test{\vec{p}}_{J,h}\in\hspHJ, \test{\vec{u}}_h\in\hspHu, \test{p}_h\in\hspLp$,
and $\chn{h}{\vec{d}}|_{t=0}\simeq\vec{d}_0,
    \chn[\jj]{h}{p}|_{t=0}\simeq p_{\jj0}\,\forall \jj\in J,
    \chn{h}{\vec{u}}|_{t=0}\simeq \vec{u}_0,
$.
Notice that $\partial_t\chn[J]{h}{\vec{p}}=\discrdt{n}\chn[J]{h}{\vec{p}}$ and $\partial_t\chn{h}{\vec{d}}=\discrdt{n}\chn{h}{\vec{d}}$ a.e. in $(0,T)$, where
\[
\discrdt{n}\phi=\frac{\phi^{\apchn{n}}-\phi^{\apchn{nmm}}}{\Delta t}.
\]

\section{\Apost{} error analysis}\label{sec:apost}

Throughout this section, the notation $a\lesssim b$ means that there exists a constant $C>0$, independent of discretization parameters (but possibly dependent on data and on the final time $T$), such that $a\leq Cb$.

The \apost{} analysis that we present is based on the following properties of the continuous problem \eqref{eq:weak}, inspired by the abstract framework of \cite{ern2009posteriori}:
\begin{proposition}\label{prpstn:contcoerc}
Under the definitions of \cref{sec:weak}, and assuming that each of the Dirichlet boundaries $\Gamma_{\text{D},\star}, \star=\vec{d},\vec{u},p_\jj, \forall\jj\in J,$ is not empty, the following properties hold:
\begin{enumerate}
    \item The forms $a_\PP:\spHd\times\spHd\to\mathbb R$, $\widetilde{a}_J:\spHJ\times\spHJ\to\mathbb R$, $a_\FF:\spHu\times\spHu\to\mathbb R$, $m_J:\spLJ\times\spLJ\to\mathbb R$ are bilinear, symmetric, coercive, and continuous in their spaces of definition.
    \item The norms $\|\cdot\|_{V_\star}$ and $\|\cdot\|_{L_\star}$ are equivalent for each $\star=\vec{d},\vec{u},J,\jj\in J$.
    \item The source terms $\vec{f}_\PP,[g_\jj]_{\jj\in J}, \vec{f}_\FF$ are square-integrable and they fulfill the following inequalities:
    \[
    \|\vec{f}_\PP\|_{\Omega_\PP}^2\lesssim a_\PP(\vec{f}_\PP,\vec{f}_\PP), \quad
    \sum_{\jj\in J}\|g_\jj\|_{\Omega_\PP}^2\lesssim \widetilde{a}([g_\jj]_{\jj\in J},[g_\jj]_{\jj\in J}), \quad
    \|\vec{f}_\FF\|_{\Omega_\FF}^2\lesssim a_\FF(\vec{f}_\FF,\vec{f}_\FF).
    \]
    \item The coupling form $b_J:\spLJ\times\spHd\to\mathbb R$ is bilinear, continuous, and \[
    |b_J(\vec{f}_\PP,[g_\jj]_{\jj\in J})|\lesssim\|\vec{f}_\PP\|_{a_\PP}\|[g_\jj]_{\jj\in J}\|_{m_J}.
    \]
    \item The Stokes operator $\LLstokes([\vec{u},\vec{p}];[\test{\vec{u}},\test{p}]):=a_\FF(\vec{u},\test{\vec{u}})+b(p,\test{\vec{u}})+b(\test{p},\vec{u})$ satisfies the following inf-sup inequality: $\exists\beta_\LLstokes>0$ s.t.
    \[
    \inf_{\substack{(\vec{u},p)\in\spHu\times\spLp\\
    \vec{u}\not\equiv\vec{0},p\not\equiv 0}}
    \sup_{\substack{(\test{\vec{u}},\test{p})\in\spHu\times\spLp\\
    \test{\vec{u}}\not\equiv\vec{0},\test{p}\not\equiv 0}}
    \frac{\LLstokes([\vec{u},\vec{p}];[\test{\vec{u}},\test{p}])}
    {(\|\vec{u}\|_\spHu+\|p\|_\spLp)(\|\test{\vec{u}}\|_\spHu+\|\test{p}\|_\spLp)}
    \gtrsim \beta_\LLstokes.
    \]
\end{enumerate}
Therefore, we can consider the spaces $\spHd,\spHJ,\spLJ,\spHu$ as endowed with the norms induced by $a_\PP,\widetilde{a}_J,m_J,a_\FF$, respectively.
\\
Moreover, the properties of points 1, 2, 4, and 5 also hold at the discrete level, with a discrete inf-sup constant $\beta_{\LLstokes,h}\in(0,\beta_\LLstokes)$ in point 5.
\end{proposition}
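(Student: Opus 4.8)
The plan is to verify each property by reducing it to classical functional inequalities — Korn and Poincar\'e--Friedrichs — together with the positivity of the physical coefficients, and then to obtain the discrete counterparts from conformity of the finite element spaces plus a stability result for the velocity--pressure pair. For point~1, bilinearity and symmetry of $a_\PP$, $a_\FF$, $m_J$, $\widetilde{a}_J$ are read directly from their definitions, using the symmetry of $\varepsilon(\cdot)$ and of the stress tensors $\sigel,\tauf$ (and requiring the exchange matrix $[\beta_{\jj\kk}]$ to be symmetric); continuity follows from Cauchy--Schwarz and boundedness of the coefficients. For coercivity, $a_\PP$ and $a_\FF$ are coercive on $\spHd,\spHu$ by Korn's inequality, which applies precisely because $\Gamma_{\text{D},\vec{d}}$ and $\Gamma_{\text{D},\vec{u}}$ are assumed nonempty (and $\mu_\PP,\lambda,\mu_\FF>0$); $m_J$ is coercive on $\spLJ$ as a weighted $L^2$ product once $c_\jj>0$; and $\widetilde{a}_J$ is coercive on $\spHJ$ by combining the diffusion contribution $\sum_{\jj}\frac{\kappa_\jj}{\mu_\jj}\|\nabla p_\jj\|_{\Omega_\PP}^2$ with the Poincar\'e--Friedrichs inequality (valid since $\Gamma_{\text{D},J}$ is nonempty) to recover the full $H^1$ norm, after observing that the zeroth-order block is positive semidefinite: the exchange terms can be symmetrized as $\tfrac12\sum_{\jj,\kk}\beta_{\jj\kk}(p_\jj-p_\kk)(\test{p}_\jj-\test{p}_\kk)$ and, together with the diagonal drainage terms $\beta_\jj^\text{e}$, contribute nonnegatively provided $\beta_{\jj\kk}\ge0$ and $\beta_\jj^\text{e}\ge0$. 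Point~2 is then an immediate corollary: coercivity yields the lower bound and continuity the upper bound, so the energy norm induced by each form is equivalent to the native Hilbert-space norm of its space, which is exactly what justifies the closing sentence of the statement.

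Points~3 and~4 are light consequences of point~1. For point~3, once the forcing terms are taken in the respective energy spaces, coercivity gives, e.g., $\|\vec{f}_\PP\|_{\Omega_\PP}^2\le\|\vec{f}_\PP\|_{\spHd}^2\lesssim a_\PP(\vec{f}_\PP,\vec{f}_\PP)$ through the trivial embedding $\|\cdot\|_{L^2}\le\|\cdot\|_{H^1}$, and the bounds for $[g_\jj]_{\jj\in J}$ and $\vec{f}_\FF$ are obtained verbatim. Point~4 is the continuity of $b_J(\test{\vec{p}}_J,\test{\vec{d}})=-\sum_\jj(\alpha_\jj\test{p}_\jj,\Div\test{\vec{d}})_{\Omega_\PP}$ measured in the energy norms: Cauchy--Schwarz gives $|b_J|\le\max_\jj|\alpha_\jj|\,\|\test{\vec{p}}_J\|_{L^2}\,\|\Div\test{\vec{d}}\|_{L^2}$, and then $\|\Div\test{\vec{d}}\|_{L^2}\lesssim\|\test{\vec{d}}\|_{a_\PP}$ by continuity of $a_\PP$ while $\|\test{\vec{p}}_J\|_{L^2}\lesssim\|\test{\vec{p}}_J\|_{m_J}$ by coercivity of $m_J$, yielding the stated bound.

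The core of the proof is point~5. I would recognise $\LLstokes$ as the symmetric saddle-point form of the Stokes problem and apply the classical Brezzi theory: its global inf-sup is equivalent to (i) coercivity of $a_\FF$ on the kernel of the divergence constraint — guaranteed by the full-space coercivity already established in point~1 — together with (ii) the inf-sup (LBB) condition for $b_\FF$ on $\spHu\times\spLp$. The delicate point is that $\spLp=L^2(\Omega_\FF)$ is the \emph{full} pressure space rather than its zero-mean subspace, so one must rule out that the pressure is determined only up to an additive constant; this is exactly where the presence of a non-Dirichlet portion of $\partial\Omega_\FF$ (the traction/interface conditions on $\Sigma$ and $\Gamma_{\text{N},\vec{u}}$) enters, restoring surjectivity of the divergence operator from $\spHu$ onto $L^2(\Omega_\FF)$ and hence $\beta_\LLstokes>0$. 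Combining (i)--(ii) gives the inf-sup inequality.

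Finally, the discrete analogues of points~1, 2 and~4 come for free from conformity, $\hspHd\subset\spHd$, $\hspHu\subset\spHu$, $\hspLp\subset\spLp$, and so on: restricting a symmetric, continuous, coercive form to a closed subspace preserves all these properties with at least as good constants, and the same Cauchy--Schwarz estimate gives discrete continuity of $b_J$. The only property not inherited automatically is the inf-sup of point~5; here I would invoke the stability of the chosen velocity--pressure pair, which is the Taylor--Hood element $(\mathbb P^{s+1},\mathbb P^{s})$ and is inf-sup stable for $s\ge1$, producing a discrete constant $\beta_{\LLstokes,h}>0$ that lies in $(0,\beta_\LLstokes)$ because the discrete supremum ranges over the smaller test space $\hspHu\times\hspLp$. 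I expect the two genuine difficulties to be (a) checking that the reaction/exchange block leaves $\widetilde{a}_J$ coercive, and above all (b) the full-$L^2$ LBB condition together with its discrete Taylor--Hood counterpart, everything else being routine Korn/Poincar\'e/Cauchy--Schwarz bookkeeping.
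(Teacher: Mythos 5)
Your proposal is correct and follows essentially the same route as the paper, whose entire proof consists of the remark that points 1--4 follow from the definitions of coercivity and continuity together with Korn/Poincar\'e inequalities and that point 5 is classical Stokes theory (citing Verf\"urth); you simply supply the details, including the genuinely relevant observations that the exchange block of $\widetilde{a}_J$ symmetrizes to a nonnegative form and that the full-$L^2(\Omega_\FF)$ pressure inf-sup relies on the non-Dirichlet portion $\Sigma\cup\Gamma_{\text{N},\vec{u}}$ of $\partial\Omega_\FF$. The only loose end is your justification that $\beta_{\LLstokes,h}<\beta_\LLstokes$ (shrinking the test space lowers the supremum, but shrinking the trial space raises the infimum, so no ordering follows automatically); the paper asserts this without proof as well, and only $\beta_{\LLstokes,h}>0$ uniformly in $h$ is actually needed.
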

\begin{proof}
The proof of points 1-4 is straightforwardly based on the definition of coercivity and continuity, and it exploits Korn/Poincar\'e inequalities.
Point 5 is discussed in classical works on Stokes equations, e.g. \cite{verfurth1989posteriori}.
\end{proof}

We now introduce the following consistency operators, namely~the residuals of problem \eqref{eq:discr} tested against continuous test functions:
\begin{equation}\label{eq:G}\begin{gathered}
\GG_{\vec{d}}\in\DUALspHd,\quad
\GG_{\vec{u}}\in\DUALspHu,\quad
\GG_J\in\DUALspHJ,\quad
\GG_p\in\DUALspLp=\spLp\quad
\text{defined as}
\\
    \begin{aligned}
\braket{\GG_{\vec{d}}, \test{\vec{d}}}_\spHd &=
(
\chn[\vec{d}]{h}{\vec{f}}
,\test{\vec{d}})_{\Omega_\PP}
- a_\PP(\chn{h}{\vec{d}},\test{\vec{d}}) - b_J(\chn[J]{h}{\vec{p}},\test{\vec{d}})
- \mathfrak J_\PP(\chn[\EE]{h}{p},\test{\vec{d}})
,\\
\braket{\GG_J, \test{\vec{p}}_J}_\spHJ &=
(\pi^0\chn{h}{\vec{g}},\test{\vec{p}}_J)_{\Omega_\PP}
- m_J(\partial_t \chn[J]{h}{\vec{p}},\test{\vec{p}}_J)
- \widetilde{a}_J(\pi^0\chn[J]{h}{\vec{p}},\test{\vec{p}}_J)
\\&\qquad
+ b_J(\test{\vec{p}}_J,\partial_t\chn{h}{\vec{d}})
+ \mathfrak J_\PP(\test{p}_{\EE},\partial_t\chn{h}{\vec{d}}) + \mathfrak J_\FF(\test{p}_{\EE},\chn{h}{\vec{u}})
,\\
\braket{\GG_{\vec{u}}, \test{\vec{u}}}_\spHu &=
(
\chn[\vec{u}]{h}{\vec{f}}
,\test{\vec{u}})_{\Omega_\FF}
- a_\FF(\chn{h}{\vec{u}},\test{\vec{u}})
- b_\FF(\chn{h}{p},\test{\vec{u}})
- \mathfrak J_\FF(\chn[\EE]{h}{p},\test{\vec{u}})
,\\
\GG_p &=
-\Div{\chn{h}{\vec{u}}}
,
    \end{aligned}
\end{gathered}\end{equation}
and by the usual notation, we denote by $\GG_{\vec{d}}^{\apchn{n}},\GG_J^{\apchn{n}},\GG_{\vec{u}}^{\apchn{n}},\GG_p^{\apchn{n}}$ their evaluation at time $t^{\apchn{n}}$.

We are now ready to introduce a preliminary result estimating the discretization errors in terms of the consistency operators.
\begin{theorem}
\label{th:preliminary}
Let the errors be denoted as $\err{c}{\vec{d}}=\vec{d}-\chn{h}{\vec{d}}, \err{c}{\vec{u}}=\vec{u}-\chn{h}{\vec{u}}, \err{c}{p}=p-\chn{h}{p}, \err{c}{J}=\vec{p}_J-\chn[J]{h}{\vec{p}}$ (the latter including $\err{c}{\EE}=p_\EE-\chn[\EE]{h}{p}$).
Then, under the assumptions of \cref{prpstn:contcoerc}, the following estimate holds:
\begin{equation}\label{eq:preliminaryEstimate}\begin{aligned}
    \|\err{c}{\vec{d}}\|_{\Linf{\spHd}{n}}^2
    &+ \|\err{c}{J}\|_{\Linf{\spLJ}{n}}^2 + \|\err{c}{\vec{u}}\|_{\Ltwo{\spHu}{n}}^2
    + \|\err{c}{J}\|_{\Ltwo{\spHJ}{n}}^2
    \\
    &\lesssim
    \estdata{n} + \esttime{n} + \estpre{n}
\end{aligned}\end{equation}
where
\begin{equation}\label{eq:estimatorsTh1}\begin{aligned}
    \estdata{n} &:= 
    \|\vec{d}_0-\vec{d}_{0,h}\|_\spHd^2 + \|\vec{u}_0-\vec{u}_{0,h}\|_\spHu^2 + \|\vec{p}_{J,0}-\vec{p}_{J,0,h}\|_\spHJ^2
    \\&\qquad
    + \|\vec{g}_J - \pi^0\chn[J]{h}{\vec{g}}\|_{\Ltwo{\DUALspHJ}{n}}^2
    + \|\vec{f}_{\vec{u}} - 
\chn[\vec{u}]{h}{\vec{f}}
    \|_{\Ltwo{\DUALspHu}{n}}^2
    \\&\qquad
    + \left(\|\vec{f}_{\vec{d}} - 
\chn[\vec{d}]{h}{\vec{f}}
    \|_{\Linf{\DUALspHd}{n}}
    + \|\partial_t(\vec{f}_{\vec{d}} -
\chn[\vec{d}]{h}{\vec{f}}
    )\|_{\Lone{\DUALspHd}{n}}
    \right)^2,
    \\
    \esttime{n} &:= \|\chn[J]{h}{\vec{p}}-\pi^0\chn[J]{h}{\vec{p}}\|_{\Ltwo{\spHJ}{n}}^2,
    \\
    \estpre{n} &:= \braket{\GG_{\vec{d}}^{\apchn{n}},\err{n}{\vec{d}}}_\spHd -  \braket{\GG_{\vec{d}}^0,\err{0}{\vec{d}}}_\spHd
        + \int_0^{t^{\apchn{n}}} \braket{\partial_t\GG_{\vec{d}},\err{c}{\vec{d}}}_\spHd ds
        \\&\qquad
        + \int_0^{t^{\apchn{n}}} \braket{\GG_{J},\err{c}{J}}_\spHJ ds
        + \int_0^{t^{\apchn{n}}} \braket{\GG_{\vec{u}},\err{c}{\vec{u}}}_\spHu ds
        + \int_0^{t^{\apchn{n}}} \braket{\GG_p,\err{c}{p}}_\spLp ds.
\end{aligned}\end{equation}
\end{theorem}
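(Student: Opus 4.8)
The plan is to run a monotone energy argument directly on the error equations, selecting the test functions so that every coupling term cancels in pairs; after the cancellations the problem collapses to an energy identity with no surviving zeroth-order coupling, so that \emph{no} Gr\"onwall inequality is required and the leftover residual pairings assemble verbatim into $\estpre{n}$. First I would derive the error equations by subtracting the defining identities \eqref{eq:G} of the consistency operators from the continuous weak form \eqref{eq:weak}: testing each continuous equation and the corresponding relation for the discrete solution against the same continuous test function and subtracting gives, for a.e.\ $t$, four identities whose right-hand sides are exactly the data mismatches $(\vec{f}_{\vec{d}}-\chn[\vec{d}]{h}{\vec{f}},\cdot)$, $(\vec{g}_J-\pi^0\chn[J]{h}{\vec{g}},\cdot)$, $(\vec{f}_{\vec{u}}-\chn[\vec{u}]{h}{\vec{f}},\cdot)$ plus the residual pairings $\braket{\GG_\star,\cdot}$. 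In the pressure identity the elliptic form acts on $\vec{p}_J-\pi^0\chn[J]{h}{\vec{p}}$, which I would split as $\widetilde{a}_J(\err{c}{J},\cdot)+\widetilde{a}_J(\chn[J]{h}{\vec{p}}-\pi^0\chn[J]{h}{\vec{p}},\cdot)$, separating the genuine error from the time-reconstruction defect that will become $\esttime{n}$.

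Next I would test the displacement identity with $\test{\vec{d}}=\partial_t\err{c}{\vec{d}}$, the pressure identity with $\test{\vec{p}}_J=\err{c}{J}$, the Stokes momentum identity with $\test{\vec{u}}=\err{c}{\vec{u}}$, and the continuity identity with $\test{p}=\err{c}{p}$, and add the four. The choice is dictated by cancellation: $b_J(\err{c}{J},\partial_t\err{c}{\vec{d}})$ and $\mathfrak{J}_\PP(\err{c}{\EE},\partial_t\err{c}{\vec{d}})$ cancel between the displacement and pressure equations, $\mathfrak{J}_\FF(\err{c}{\EE},\err{c}{\vec{u}})$ cancels between the pressure and Stokes equations, and the continuity identity rewrites $b_\FF(\err{c}{p},\err{c}{\vec{u}})$ as the pairing $\braket{\GG_p,\err{c}{p}}$. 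Using the symmetry of $a_\PP$ and $m_J$ (\cref{prpstn:contcoerc}, point~1) to recognise $a_\PP(\err{c}{\vec{d}},\partial_t\err{c}{\vec{d}})=\frac12\frac{d}{dt}\|\err{c}{\vec{d}}\|_{a_\PP}^2$ and $m_J(\partial_t\err{c}{J},\err{c}{J})=\frac12\frac{d}{dt}\|\err{c}{J}\|_{m_J}^2$, I arrive at the clean differential identity
\[
\frac12\frac{d}{dt}\Big(\|\err{c}{\vec{d}}\|_{a_\PP}^2+\|\err{c}{J}\|_{m_J}^2\Big)+\|\err{c}{J}\|_{\widetilde{a}_J}^2+\|\err{c}{\vec{u}}\|_{a_\FF}^2=\mathcal{R},
\]
where $\mathcal{R}$ gathers the data mismatches and the residual pairings $\braket{\GG_{\vec{d}},\partial_t\err{c}{\vec{d}}}$, $\braket{\GG_J,\err{c}{J}}$, $\braket{\GG_{\vec{u}},\err{c}{\vec{u}}}$, $\braket{\GG_p,\err{c}{p}}$.

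The hard part is the time integration, because the quasi-static displacement equation carries no time derivative of its own yet contributes $\braket{\GG_{\vec{d}},\partial_t\err{c}{\vec{d}}}$ and $(\vec{f}_{\vec{d}}-\chn[\vec{d}]{h}{\vec{f}},\partial_t\err{c}{\vec{d}})$, and $\partial_t\err{c}{\vec{d}}$ is not controlled by the energy (the left-hand side controls $\err{c}{\vec{d}}$ only in the $a_\PP$ norm). The remedy is integration by parts in time: $\int_0^{t^{\apchn{n}}}\braket{\GG_{\vec{d}},\partial_t\err{c}{\vec{d}}}\,ds$ becomes $\braket{\GG_{\vec{d}}^{\apchn{n}},\err{n}{\vec{d}}}-\braket{\GG_{\vec{d}}^0,\err{0}{\vec{d}}}-\int_0^{t^{\apchn{n}}}\braket{\partial_t\GG_{\vec{d}},\err{c}{\vec{d}}}\,ds$, i.e.\ precisely the boundary and time-derivative contributions collected in $\estpre{n}$, and the same manipulation of the data term forces the $\Linf{\DUALspHd}{n}$ and $\Lone{\DUALspHd}{n}$ contributions in $\estdata{n}$ to appear. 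Integrating the identity up to an arbitrary $s^*\le t^{\apchn{n}}$, the boundary terms at $s^*$ produce the energy norms of $\err{c}{\vec{d}}$ and $\err{c}{J}$, whose supremum over $s^*$ yields the $L^\infty$-in-time norms in \eqref{eq:preliminaryEstimate} after the coercivity/continuity equivalences of \cref{prpstn:contcoerc}; the time integrals of $\|\err{c}{J}\|_{\widetilde{a}_J}^2$ and $\|\err{c}{\vec{u}}\|_{a_\FF}^2$ give the $\Ltwo{\spHJ}{n}$ and $\Ltwo{\spHu}{n}$ norms; and the boundary terms at $t=0$ are the initial errors controlled by the initial-data part of $\estdata{n}$.

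Finally I would estimate the remaining right-hand side terms. The time defect $\widetilde{a}_J(\chn[J]{h}{\vec{p}}-\pi^0\chn[J]{h}{\vec{p}},\err{c}{J})$ and the data pairings $(\vec{g}_J-\pi^0\chn[J]{h}{\vec{g}},\err{c}{J})$, $(\vec{f}_{\vec{u}}-\chn[\vec{u}]{h}{\vec{f}},\err{c}{\vec{u}})$ are bounded by continuity and $V'$--$V$ duality followed by Young's inequality; their estimator parts produce $\esttime{n}$ and the $\Ltwo{}{}$ data terms of $\estdata{n}$, while the energy factors $\|\err{c}{J}\|_{\widetilde{a}_J}^2$ and $\|\err{c}{\vec{u}}\|_{a_\FF}^2$ (and the $\epsilon$-fractions of the $L^\infty$ quantities arising from the boundary data term) are absorbed into the left-hand side. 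The pairings $\braket{\GG_J,\err{c}{J}}$, $\braket{\GG_{\vec{u}},\err{c}{\vec{u}}}$, $\braket{\GG_p,\err{c}{p}}$ are kept untouched, since together with the displacement contributions they constitute $\estpre{n}$ and will be localized only in the subsequent analysis. I note that the Stokes pressure error $\err{c}{p}$ does not appear on the left-hand side of \eqref{eq:preliminaryEstimate}, so the inf-sup property (\cref{prpstn:contcoerc}, point~5) is \emph{not} needed at this stage and will enter only when $\braket{\GG_p,\err{c}{p}}$ is later bounded.
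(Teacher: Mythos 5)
Your proposal is correct and follows essentially the same route as the paper: the same error equations, the same test functions $(\partial_t\err{c}{\vec{d}},\err{c}{J},\err{c}{\vec{u}},\err{c}{p})$ with the same pairwise cancellation of the coupling forms, the same integration by parts in time for the $\partial_t\err{c}{\vec{d}}$ pairings, and the same final absorption argument yielding the $L^\infty$-in-time norms. The only (immaterial) deviation is your linear splitting of $\widetilde{a}_J(\vec{p}_J-\pi^0\chn[J]{h}{\vec{p}},\err{c}{J})$ followed by Young's inequality, where the paper instead uses the polarization identity $\widetilde{a}_J(\err{*}{J},\err{c}{J})=\tfrac12[\widetilde{a}_J(\err{*}{J},\err{*}{J})+\widetilde{a}_J(\err{c}{J},\err{c}{J})-\widetilde{a}_J(\err{*}{J}-\err{c}{J},\err{*}{J}-\err{c}{J})]$; both produce the same $\esttime{n}$.
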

\begin{proof}
    Let us introduce also $\err{*}{J} = \vec{p}_J-\pi^0\chn[J]{h}{\vec{p}}$ (including its component $\err{*}{\EE} = p_\EE-\pi^0\chn[\EE]{h}{p}$).
    Subtracting the discrete problem \eqref{eq:discr} from the continuous problem \eqref{eq:weak}, both tested against generic continuous test functions $(\test{\vec{d}},\test{\vec{p}}_J,\test{\vec{u}},\test{p})\in\spHd\times\spHJ\times\spHu\times\spLp$, yields the following error problem, holding a.e.~in time:
\begin{equation}\label{eq:pberr}\left\{
    \begin{aligned}
    &a_\PP(\err{c}{\vec{d}},\test{\vec{d}}) + b_J(\err{c}{J},\test{\vec{d}})
    + a_\FF(\err{c}{\vec{u}},\test{\vec{u}}) + b_\FF(\chn{h}{p},\test{\vec{u}})
    - b_\FF(\test{p}_h,\chn{h}{\vec{u}})
    \\&\qquad\qquad
    + \mathfrak J_\PP(\err{c}{\EE},\test{\vec{d}})
    + \mathfrak J_\FF(\err{c}{\EE},\test{\vec{u}})
    \\&\qquad
    = (\vec{f}_{\vec{d}}-
\chn[\vec{d}]{h}{\vec{f}}
    ,\test{\vec{d}})_{\Omega_\PP}
    + (\vec{f}_{\vec{u}}-
\chn[\vec{u}]{h}{\vec{f}}
    ,\test{\vec{u}})_{\Omega_\FF}
    \\&\qquad\qquad
    + \braket{\GG_{\vec{d}},\test{\vec{d}}}_\spHd + \braket{\GG_{\vec{u}},\test{\vec{u}}}_\spHu
    + \braket{\GG_p,\test{p}}_\spLp,
    \\
    &m_J(\partial_t \err{c}{J},\test{\vec{p}}_{J})
    + \widetilde{a}_J(\err{*}{J},\test{\vec{p}}_{J})
    + b_J(\test{\vec{p}}_{J},\partial_t\err{c}{\vec{d}})
    - \mathfrak J_\PP(\test{p}_{\EE},\partial_t\err{c}{\vec{d}}) - \mathfrak J_\FF(\test{p}_{\EE},\err{c}{\vec{u}})
    \\&\qquad\qquad
    = (\chn{c}{\vec{g}}-\pi^0\chn{h}{\vec{g}},\test{\vec{p}}_{J})_{\Omega_\PP}
    + \braket{\GG_J,\test{\vec{p}}_J}_\spHJ.
\end{aligned}\right.\end{equation}

We now choose $\test{\vec{d}} = \partial_t\err{c}{\vec{d}}, \test{\vec{u}}=\err{c}{\vec{u}}, \test{\vec{p}}_J=\err{c}{J}, \test{p}=\err{c}{p}$ as test functions and we sum up the equations in \eqref{eq:pberr}, noticing that the terms involving the forms $b_J, b_\FF, \mathfrak{J}_\PP, \mathfrak{J}_\FF$ cancel out for this choice of the test functions.
Moreover, we observe that the following equalities hold:
\begin{gather*}
    a_\PP(\err{c}{\vec{d}},\partial_t\err{c}{\vec{d}}) = \frac{1}{2}\frac{d}{dt}a_\PP(\err{c}{\vec{d}},\err{c}{\vec{d}}),
    \qquad
    m_J(\partial_t\err{c}{J},\err{c}{J}) = \frac{1}{2}\frac{d}{dt}m_J(\err{c}{J},\err{c}{J}),
    \\
    \begin{aligned}
    \widetilde{a}_J(\err{*}{J},\err{c}{J})
    &= \frac{1}{2}\left[
    \widetilde{a}_J(\err{*}{J},\err{*}{J})
    + \widetilde{a}_J(\err{c}{J},\err{c}{J})
    - \widetilde{a}_J(\err{*}{J}-\err{c}{J},\err{*}{J}-\err{c}{J})\right]
    \\&= \frac{1}{2}\left[
    \widetilde{a}_J(\err{*}{J},\err{*}{J})
    + \widetilde{a}_J(\err{c}{J},\err{c}{J})
    - \widetilde{a}_J(\chn[J]{h}{\vec{p}}-\pi^0\chn[J]{h}{\vec{p}}, \chn[J]{h}{\vec{p}}-\pi^0\chn[J]{h}{\vec{p}})
    \right]
    \end{aligned}
\end{gather*}
due to the symmetry of the forms and the definition of $\err{*}{J}$.
Therefore, we obtain the following identity:
\begin{equation}\label{eq:proof1lhs1}\begin{aligned}
    &\frac{1}{2}\frac{d}{dt}a_\PP(\err{c}{\vec{d}},\err{c}{\vec{d}})
    +\frac{1}{2}\frac{d}{dt}m_J(\err{c}{J},\err{c}{J})
    +a_\FF(\err{c}{\vec{u}},\err{c}{\vec{u}})
    \\&\qquad\qquad
    +\frac{1}{2}
    \widetilde{a}_J(\err{*}{J},\err{*}{J})
    +\frac{1}{2}
    \widetilde{a}_J(\err{c}{J},\err{c}{J})
    \\&\qquad
    =(\vec{f}_{\vec{d}}-
\chn[\vec{d}]{h}{\vec{f}}
    ,\partial_t\err{c}{\vec{d}})_{\Omega_\PP}
    + (\vec{f}_{\vec{u}}-
\chn[\vec{u}]{h}{\vec{f}}
    ,\err{c}{\vec{u}})_{\Omega_\FF}
    + (\chn{c}{\vec{g}}-\pi^0\chn{h}{\vec{g}},\err{c}{J})_{\Omega_\PP}
    \\&\qquad\qquad
    +\frac{1}{2}\widetilde{a}_J(\chn[J]{h}{\vec{p}}-\pi^0\chn[J]{h}{\vec{p}}, \chn[J]{h}{\vec{p}}-\pi^0\chn[J]{h}{\vec{p}})
    \\&\qquad\qquad
    +\braket{\GG_{\vec{d}},\partial_t\err{c}{\vec{d}}}_\spHd + \braket{\GG_{\vec{u}},\err{c}{\vec{u}}}_\spHu
    + \braket{\GG_p,\err{c}{p}}_\spLp
    + \braket{\GG_J,\err{c}{J}}_\spHJ
    .
\end{aligned}
\end{equation}
We integrate \eqref{eq:proof1lhs1} in time from 0 to $t^{\apchn{n}}$ and we use the following integration by parts formula:
\begin{align*}
    \int_0^{t^{\apchn{n}}}
    &\left[
    (\vec{f}_{\vec{d}}-
\chn[\vec{d}]{h}{\vec{f}}
    ,\partial_t\err{c}{\vec{d}})_{\Omega_\PP}
    +\braket{\GG_{\vec{d}},\partial_t\err{c}{\vec{d}}}_\spHd
    \right]ds
    \\&\qquad
    = (\vec{f}_{\vec{d}}^{\apchn{n}}-
\chn[\vec{d}]{n}{\vec{f}}
    ,\err{n}{\vec{d}})_{\Omega_\PP}
    - (\vec{f}_{\vec{d}}^0-\vec{f}_{\vec{d},h}^0,\err{0}{\vec{d}})_{\Omega_\PP}
    +\braket{\GG_{\vec{d}}^{\apchn{n}},\err{n}{\vec{d}}}_\spHd
    -\braket{\GG_{\vec{d}}^0,\err{0}{\vec{d}}}_\spHd
    \\&\qquad\qquad
    -\int_0^{t^{\apchn{n}}}\left[
    (\partial_t(\vec{f}_{\vec{d}}-
\chn[\vec{d}]{h}{\vec{f}}
    ),\err{c}{\vec{d}})_{\Omega_\PP}
    +\braket{\partial_t\GG_{\vec{d}},\err{c}{\vec{d}}}_\spHd
    \right]ds.
\end{align*}
Then, using the coercivity of the forms $a_\PP,\widetilde{a}_J,m_J,a_\FF$ (see \cref{prpstn:contcoerc}) and employing the Cauchy-Schwarz and the Young inequalities yield the following inequality:
\begin{equation*}
\begin{aligned}
    &\|\err{n}{\vec{d}}\|_\spHd^2
    +\|\err{n}{J}\|_\spLJ^2
    +\|\err{c}{\vec{u}}\|_{\Ltwo{\spHu}{n}}^2
    +\|\err{*}{J}\|_{\Ltwo{\spHJ}{n}}^2
    +\|\err{c}{J}\|_{\Ltwo{\spHJ}{n}}^2
    \\&\quad
    \lesssim\|\err{0}{\vec{d}}\|_\spHd^2
    +\|\err{0}{J}\|_\spLJ^2
    + \|\vec{f}_{\vec{d}}^{\apchn{n}}-
\chn[\vec{d}]{n}{\vec{f}}
    \|_\spLd^2
    - \|\vec{f}_{\vec{d}}^0-\vec{f}_{\vec{d},h}^0\|_\spLd^2
    +\braket{\GG_{\vec{d}}^{\apchn{n}},\err{n}{\vec{d}}}_\spHd
    -\braket{\GG_{\vec{d}}^0,\err{0}{\vec{d}}}_\spHd
    \\&\quad\quad
    +\int_0^{t^{\apchn{n}}}\left[
    \|\partial_t(\vec{f}_{\vec{d}}-
\chn[\vec{d}]{n}{\vec{f}}
    )\|_\spLd\|\err{c}{\vec{d}}\|_\spLd
    + \|\vec{f}_{\vec{u}}-
\chn[\vec{u}]{n}{\vec{f}}
    \|_\spLu\|\err{c}{\vec{u}}\|_\spLu
    + \|\chn{c}{\vec{g}}-\pi^0\chn{h}{\vec{g}}\|_\spLJ\|\err{c}{J}\|_\spLJ
    \right]ds
    \\&\quad\quad
    +\int_0^{t^{\apchn{n}}}\left[
    \|\chn[J]{h}{\vec{p}}-\pi^0\chn[J]{h}{\vec{p}}\|_\spHJ^2
    +\braket{\partial_t\GG_{\vec{d}},\err{c}{\vec{d}}}_\spHd
    + \braket{\GG_{\vec{u}},\err{c}{\vec{u}}}_\spHu
    + \braket{\GG_p,\err{c}{p}}_\spLp
    + \braket{\GG_J,\err{c}{J}}_\spHJ
    \right]ds
    .
\end{aligned}
\end{equation*}
Using the Cauchy-Schwarz and the Young inequalities also in the time integrals yields
\begin{equation}\label{eq:endproofth1}\begin{aligned}
    \|\err{n}{\vec{d}}\|_{\spHd}^2
    &+ \|\err{n}{J}\|_{\spLJ}^2 + \|\err{c}{\vec{u}}\|_{\Ltwo{\spHu}{n}}^2 + \|\err{c}{J}\|_{\Ltwo{\spHJ}{n}}^2
    \\
    &
    + \|\vec{p}_J-\pi^0\chn[J]{h}{\vec{p}}\|_{\Ltwo{\spHJ}{n}}^2
    \lesssim
    \tildeestdata{n} + \esttime{n} + \estpre{n},
\end{aligned}\end{equation}
where
$\|\err{*}{J}\|_{\Ltwo{\spHJ}{n}}^2$ can be removed, being positive, and
\[\begin{aligned}
    \tildeestdata{n} &:= 
    \estdata{n}
    - \left(\|\vec{f}_{\vec{d}} - 
\chn[\vec{d}]{h}{\vec{f}}
    \|_{\Linf{\DUALspHd}{n}}
    + \|\partial_t(\vec{f}_{\vec{d}} - 
\chn[\vec{d}]{h}{\vec{f}}
    )\|_{\Lone{\DUALspHd}{n}}
    \right)^2
    \\&\qquad
    + \left(\|\vec{f}_{\vec{d}}^{\apchn{n}} - 
\chn[\vec{d}]{n}{\vec{f}}
    \|_{\DUALspHd}
    + \|\partial_t(\vec{f}_{\vec{d}} - 
\chn[\vec{d}]{h}{\vec{f}}
    )\|_{\Lone{\DUALspHd}{n}}
    \right)^2.
\end{aligned}\]
Since \eqref{eq:endproofth1} holds for any $n=1,\ldots,N_T$, on the left-hand side we can replace $\|\err{n}{\vec{d}}\|_{\spHd}^2 + \|\err{n}{J}\|_{\spLJ}^2$ with $\|\err{n}{\vec{d}}\|_{\Linf{\spHd}{n}}^2 + \|\err{n}{J}\|_{\Linf{\spLJ}{n}}^2$, while on the right-hand side $\tildeestdata{n}$ can be substituted by $\estdata{n}$.
This concludes the proof.
\end{proof}

We remark that in the preliminary estimate of \cref{th:preliminary}, the estimator $\estpre{n}$ is not computable \apost{} as it depends on the errors.
In the following section, we thus address its estimation in terms of local residuals and jumps.
To this aim, we introduce the jump $\jump{\vec{\phi}}_\face$ of a generic vector field $\vec{\phi}:\Omega\to\mathbb R^\dimens$ across a face $\face$, defined as follows:
\[
\jump{\vec{\phi}}_\face = \vec{\phi}^+\odot\vec{n}_\face^+ + \vec{\phi}^-\odot\vec{n}_\face^-
\qquad\text{with}\qquad
\vec{\phi}\odot\vec{n} = \frac{1}{2}(\vec{\phi}\otimes\vec{n}+\vec{n}\otimes\vec{\phi}),
\]
where the superscripts $+$ and $-$ denote the restriction on either side of the face $\face$.
This jump operator is employed in the Discontinuous Galerkin community (see, e.g., \cite{antonietti2022stability,fumagalli2024polytopal}), yet all the results still hold if the non-symmetric outer product $\vec{\phi}\otimes\vec{n}$ is used in place of $\vec{\phi}\odot\vec{n}$.

\subsection{\Apost{} estimates of the residual operators}

We now address the estimation of the terms of \cref{th:preliminary} that involve the residual operators $\GG_\star, \star=\vec{d},\vec{u},J,p$.

\begin{lemma}\label{lmm:GG}
Under the same assumptions of \cref{th:preliminary}, the following estimates hold:
\begin{enumerate}
    \item
    $\displaystyle
    \|\GG_{\vec{d}}\|_{\Linf{\DUALspHd}{T}}^2
    \lesssim
    \estim_{\vec{d}} := \sup_{0\leq n\leq N_T} \estim_{\vec{d}}^{\apchn{n}}
    $
    \qquad where
    \[\begin{aligned}
    \estim_{\vec{d}}^{\apchn{n}}
    &
    := \sum_{\elem\in\mesh{T}_\PP}\Biggl(
    h_\elem^2\|\RR[\vec{d}]{n}{\elem}\|_\elem^2
    + \sum_{\substack{\face\in\mesh{F}_\PP^\II\\\face\in\partial\elem}}
        h_\elem\|\faceRR[\vec{d}]{n}{\face}\|_\face^2
    + \sum_{\substack{\face\in\mesh{F}_\Sigma\\\face\in\partial\elem}}
        h_\elem\|\interffaceRR[\vec{d}]{n}{\face}\|_\face^2
    \Biggr),
    \\
    \RR[\vec{d}]{n}{\elem}
    &
    := \chn[\vec{d}]{n}{\vec{f}} + \Div{\sigel\left(\chn{n}{\vec{d}}|_\elem\right)} - \sum_{\jj\in J}\alpha_\jj\nabla\chn[\jj]{n}{p}|_\elem,
    \\
    \faceRR[\vec{d}]{n}{\face}
    &
    := - \jump{\sigel(\chn{n}{\vec{d}})\vec{n}_\face}_\face,
    \\
    \interffaceRR[\vec{d}]{n}{\face}
    &
    := -\sigel\left(\chn{n}{\vec{d}}\right)\vec{n}_\PP + \sum_{\jj\in J}\alpha_\jj\chn[\jj]{n}{p}\vec{n}_\PP - \chn[\EE]{n}{p}\vec{n}_\PP;
    \end{aligned}\]
    \item
    $\displaystyle
    \|\partial_t\GG_{\vec{d}}\|_{\Lone{\DUALspHd}{T}}^2
    \lesssim
    \estim_{\vec{d}}(\partial_t) := \left[\sum_{n=1}^{N_T} \Delta t\ \left(\estim_{\vec{d}}^{\apchn{n}}(\partial_t)\right)^{1/2}\right]^2
    $
    \qquad where
    \[\begin{aligned}
    \estim_{\vec{d}}^{\apchn{n}}(\partial_t)
    &
    := \sum_{\elem\in\mesh{T}_\PP}\Biggl(
    h_\elem^2\left\|\frac{\RR[\vec{d}]{n}{\elem}-\RR[\vec{d}]{nmm}{\elem}}{\Delta t}\right\|_\elem^2
    + \sum_{\substack{\face\in\mesh{F}_\PP^\II\\\face\in\partial\elem}}
        h_\elem\left\|\frac{\faceRR[\vec{d}]{n}{\face}-\faceRR[\vec{d}]{nmm}{\face}}{\Delta t}\right\|_\face^2
    \\&\qquad\qquad
    + \sum_{\substack{\face\in\mesh{F}_\Sigma\\\face\in\partial\elem}}
        h_\elem\left\|\frac{\interffaceRR[\vec{d}]{n}{\face}-\interffaceRR[\vec{d}]{nmm}{\face}}{\Delta t}\right\|_\face^2
    \Biggr);
    \end{aligned}\]
    \item
    $\displaystyle
    \|\GG_J\|_{\Ltwo{\DUALspHJ}{T}}^2
    \lesssim
    \estim_J := \sum_{n=1}^{N_T} \Delta t\ \estim_J^{\apchn{n}}
    $
    \qquad where
    \[\begin{aligned}
    \estim_J^{\apchn{n}}
    &
    := \sum_{\elem\in\mesh{T}_\PP}\Biggl(
    h_\elem^2\|\RR[J]{n}{\elem}\|_\elem^2
    + \sum_{\substack{\face\in\mesh{F}_\PP^\II\\\face\in\partial\elem}}
        h_\elem\|\faceRR[J]{n}{\face}\|_\face^2
    + \sum_{\substack{\face\in\mesh{F}_\Sigma\\\face\in\partial\elem}}
        h_\elem\|\interffaceRR[J]{n}{\face}\|_\face^2
    \Biggr),
    \\
        \RR[J]{n}{\elem}
        &
        := \begin{bmatrix}
            \RR[\jj]{n}{\elem}
        \end{bmatrix}_{\jj\in J},
        \qquad
        \faceRR[J]{n}{\face}
        := \begin{bmatrix}
            \faceRR[\jj]{n}{\face}
        \end{bmatrix}_{\jj\in J},
    \\
    \RR[\jj]{n}{\elem}
    &
    := \chn[\jj]{n}{g} - \frac{c_\jj}{\Delta t}(\chn[\jj]{n}{p}-\chn[\jj]{nmm}{p}) + \Div{\left(\frac{\kappa_\jj}{\mu_\jj}\nabla\chn[\jj]{n}{p}|_\elem\right)}
    \\&\qquad
    - \Div{\left[\frac{\alpha_\jj}{\Delta t}(\chn{n}{\vec{d}}-\chn{nmm}{\vec{d}})|_\elem\right]}
    - \beta_\jj^\text{e}\chn[\jj]{n}{p} - \sum_{\ell\in J}\beta_{\jj\ell}(\chn[\jj]{n}{p} - \chn[\ell]{n}{p}),
    \\
    \faceRR[J]{n}{\face}
    &
    := - \Jump{\frac{\kappa_\jj}{\mu_\jj}\nabla\chn[\jj]{n}{p}\cdot\vec{n}_\face}_\face,
    \\
    \interffaceRR[J]{n}{\face}
    &
    := -\sum_{\jj\in J}\left(\frac{\kappa_\jj}{\mu_\jj}\nabla\chn[\jj]{n}{p}\cdot\vec{n}_\PP\right)
    + \frac{1}{\Delta t}(\chn{n}{\vec{d}}-\chn{nmm}{\vec{d}})\cdot\vec{n}_\PP
    + \chn{n}{\vec{u}}\cdot\vec{n}_\FF;
    \end{aligned}\]
    \item
    $\displaystyle
    \|(\GG_{\vec{u}},\GG_p)\|_{\Ltwo{\DUALspHJ\times\spLp}{T}}^2
    \lesssim
    \estim_{\vec{u}p} := \sum_{n=1}^{N_T} \Delta t\ \estim_{\vec{u}p}^{\apchn{n}}
    $
    \qquad where
    \[\begin{aligned}
    \estim_{\vec{u}p}^{\apchn{n}}
    &
    := \sum_{\elem\in\mesh{T}_\FF}\Biggl(
    h_\elem^2\|\RR[\vec{u}]{n}{\elem}\|_\elem^2
    + \|\Div{\chn{n}{\vec{u}}}\|_\elem^2
    \\&\qquad\qquad
    + \sum_{\substack{\face\in\mesh{F}_\FF^\II\\\face\in\partial\elem}}
        h_\elem\|\faceRR[\vec{u}]{n}{\face}\|_\face^2
    + \sum_{\substack{\face\in\mesh{F}_\Sigma\\\face\in\partial\elem}}
        h_\elem\|\interffaceRR[\vec{u}]{n}{\face}\|_\face^2
    \Biggr),
    \\
    \RR[\vec{u}]{n}{\elem}
    &
    := \chn[\vec{u}]{n}{\vec{f}} + \Div{\tauf\left(\chn{n}{\vec{u}}|_\elem\right)} - \nabla\chn{n}{p}|_\elem,
    \\
    \faceRR[\vec{u}]{n}{\face}
    &
    := - \jump{\tauf(\chn{n}{\vec{u}})\vec{n}_\face}_\face,
    \\
    \interffaceRR[\vec{u}]{n}{\face}
    &
    := -\tauf\left(\chn{n}{\vec{u}}\right)\vec{n}_\FF + \chn{n}{p}\vec{n}_\FF - \chn[\EE]{n}{p}\vec{n}_\FF.
\end{aligned}\]
\end{enumerate}
\end{lemma}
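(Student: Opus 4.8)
The plan is to bound each residual operator by the classical residual technique: fix a time node $t^{\apchn{n}}$, use Galerkin orthogonality to subtract an interpolant of the continuous test function, integrate by parts element by element to expose the volume, interior-facet and interface-facet residuals, and finally recombine the local pieces through the interpolation estimate \eqref{eq:estInterp} and the Cauchy--Schwarz inequality. The Bochner-in-time norms in the statement are then recovered from these time-nodal bounds, exploiting that all discrete fields are piecewise polynomial in time.

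I would carry out the archetypal computation on $\GG_{\vec{d}}$ (point 1). Since the first line of \eqref{eq:discr} gives $\braket{\GG_{\vec{d}}^{\apchn{n}},\test{\vec{d}}_h}_\spHd=0$ for every $\test{\vec{d}}_h\in\hspHd$, for arbitrary $\test{\vec{d}}\in\spHd$ I write $\braket{\GG_{\vec{d}}^{\apchn{n}},\test{\vec{d}}}_\spHd=\braket{\GG_{\vec{d}}^{\apchn{n}},\test{\vec{d}}-\interp{\vec{d}}{\test{\vec{d}}}}_\spHd$ and insert the definition \eqref{eq:G}. Integrating $a_\PP(\chn{n}{\vec{d}},\cdot)$ by parts on each $\elem\in\mesh{T}_\PP$ (using the symmetry of $\sigel$ to replace $\varepsilon$ by the full gradient) and doing likewise with $b_J(\chn[J]{n}{\vec{p}},\cdot)$ produces the bulk residual $\RR[\vec{d}]{n}{\elem}$, while the surface contributions reassemble across shared faces. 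On an interior face the pressure trace is single-valued (since $\chn[J]{h}{\vec{p}}$ is $C^0$) and cancels against the neighbour, leaving only the traction jump $\faceRR[\vec{d}]{n}{\face}$; on $\Sigma$ the one-sided traction, the pressure trace and the term from $\mathfrak J_\PP(\chn[\EE]{n}{p},\cdot)$ combine into $\interffaceRR[\vec{d}]{n}{\face}$, whereas Dirichlet boundary faces contribute nothing because $\test{\vec{d}}-\interp{\vec{d}}{\test{\vec{d}}}$ vanishes there. Estimating each local pairing by Cauchy--Schwarz, inserting the weights $h_\elem$ on bulk residuals and $h_\elem^{1/2}$ on facet residuals, and invoking \eqref{eq:estInterp} for the interpolation error yields $\|\GG_{\vec{d}}^{\apchn{n}}\|_{\DUALspHd}^2\lesssim\estim_{\vec{d}}^{\apchn{n}}$; the supremum over $n$ then gives point 1.

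Point 2 follows by applying this bound to time increments: as every discrete field is piecewise linear in time, $\GG_{\vec{d}}$ is piecewise linear and $\partial_t\GG_{\vec{d}}$ is piecewise constant, equal on $I^{\apchn{n}}$ to $\discrdt{n}\GG_{\vec{d}}$, whose residuals are precisely the finite differences appearing in $\estim_{\vec{d}}^{\apchn{n}}(\partial_t)$; the $\Lone{\cdot}{T}$ norm then collapses to the weighted sum defining $\estim_{\vec{d}}(\partial_t)$. Points 3 and 4 use the same element-wise integration by parts on $\GG_J^{\apchn{n}}$ and $\GG_{\vec{u}}^{\apchn{n}}$: for $\GG_J$ the storage, mass-exchange and poroelastic-coupling terms are already volumetric and feed $\RR[J]{n}{\elem}$, the Darcy flux yields the conormal jump $\faceRR[J]{n}{\face}$, and the flux trace together with $\mathfrak J_\PP(\cdot,\partial_t\chn{n}{\vec{d}})$ and $\mathfrak J_\FF(\cdot,\chn{n}{\vec{u}})$ assemble $\interffaceRR[J]{n}{\face}$; for $\GG_{\vec{u}}$ one recovers $\RR[\vec{u}]{n}{\elem}$, $\faceRR[\vec{u}]{n}{\face}$ and $\interffaceRR[\vec{u}]{n}{\face}$ exactly as in the elasticity case, while the continuity residual needs no integration by parts, $\GG_p=-\Div{\chn{h}{\vec{u}}}\in\spLp$ contributing $\|\Div{\chn{n}{\vec{u}}}\|_\elem^2$ directly. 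The $\Ltwo{\cdot}{T}$ norms in points 3 and 4 finally reduce to $\sum_n\Delta t\,\estim_\bullet^{\apchn{n}}$ by bounding the time integral of the piecewise-polynomial residuals by their nodal values.

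The routine part is the local integration by parts together with the Cauchy--Schwarz/interpolation bookkeeping, which is standard once \cref{prpstn:contcoerc} and \eqref{eq:estInterp} are in hand. The genuinely delicate step, and the new feature compared to single-domain analyses, is the assembly of the interface residuals on $\mesh{F}_\Sigma$: one must check that the one-sided traces generated by integrating the bulk forms by parts combine consistently with the coupling functionals $\mathfrak J_\PP,\mathfrak J_\FF$ (which act only through the $\EE$-component $p_\EE$), so that $\interffaceRR[\vec{d}]{n}{\face}$, $\interffaceRR[J]{n}{\face}$ and $\interffaceRR[\vec{u}]{n}{\face}$ encode exactly the discrete mismatch in the interface conditions \eqref{eq:interf}. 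A secondary technicality is the passage from the Bochner-in-time norms to the time-nodal estimators, which requires tracking whether each residual is piecewise constant in time (those built from $\pi^0$ and difference quotients) or piecewise linear (those still carrying $\chn{h}{\vec{u}}$).
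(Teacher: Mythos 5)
Your proposal matches the paper's proof essentially step for step: Galerkin orthogonality to insert the interpolant, element-wise integration by parts producing the bulk, interior-facet and interface residuals (with the $C^0$ pressure traces cancelling on interior faces), Cauchy--Schwarz with the $h_\elem$ and $h_\elem^{1/2}$ weights combined with \eqref{eq:estInterp}, and the passage to the Bochner norms via the piecewise-constant/linear time structure of the residuals, including the treatment of $\partial_t\GG_{\vec{d}}$ as the constant difference quotient $\discrdt{n}\GG_{\vec{d}}$ and of $\GG_p$ without integration by parts. The approach and all key observations coincide with the paper's argument.
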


\begin{remark}
    We point out that the local residual estimators $\RR[\star]{n}{\elem}, \star =\vec{d},J,\vec{u},$ are the local residuals of the bulk equations \eqref{eq:elasticity}-\eqref{eq:pj}-\eqref{eq:fluidMom} in strong form, while the terms $\interffaceRR[\star]{n}{\face}, \star =\vec{d},J,\vec{u},$ are arise from combinations of the interface conditions (as highlighted by the arrows):
    \[\begin{aligned}
        \interffaceRR[\vec{d}]{n}{\face}
        &
        = -\left(
            \sigel(\chn{n}{\vec{d}})\vec{n}_\PP - \sum_{\jj\in J}\alpha_\jj\chn[\jj]{n}{p}\vec{n}_\PP
            + \tauf(\chn{n}{\vec{u}})\vec{n}_\FF - \chn{n}{p}\vec{n}_\FF
            \right)
        &\qquad\longleftrightarrow \eqref{eq:BCtotalstress}\\
        &\qquad
        - \left(
            \chn[\EE]{n}{p} - \chn{n}{p} + \tauf(\chn{n}{\vec{u}})\vec{n}_\FF\cdot\vec{n}_\FF\right)\vec{n}_\PP
        &\qquad\longleftrightarrow \eqref{eq:BCnormalstress}\\
        &\qquad
        - \left(
            \tauf(\chn{n}{\vec{u}})\vec{n}_\FF\right)_\parallel,
        &\qquad\longleftrightarrow \eqref{eq:BCtgstress}\\
        \interffaceRR[J]{n}{\face}
        &
        = \sum_{\jj\in J\setminus\{\EE\}}\left(\frac{\kappa_\jj}{\mu_\jj}\nabla\chn[\jj]{n}{p}\cdot\vec{n}_\PP\right)
        &\qquad\longleftrightarrow \eqref{eq:BCpnonE}\\
        &\qquad
        + \left(\chn{n}{\vec{u}}\cdot\vec{n}_\FF + \frac{\chn{n}{\vec{d}} - \chn{nmm}{\vec{d}}}{\Delta t}\cdot\vec{n}_\PP - \frac{\kappa_\EE}{\mu_\EE}\nabla\chn[\EE]{n}{p}\cdot\vec{n}_\PP\right),
        &\qquad\longleftrightarrow \eqref{eq:BCnormalflux}\\
        \PREinterffaceRR[\vec{u}]{n}{\face}
        &
        = -\left(
            \chn[\EE]{n}{p} - \chn{n}{p} + \tauf(\chn{n}{\vec{u}})\vec{n}_\FF\cdot\vec{n}_\FF\right)\vec{n}_\PP
        &\qquad\longleftrightarrow \eqref{eq:BCnormalstress}\\
        &\qquad
        + \left(
            \tauf(\chn{n}{\vec{u}})\vec{n}_\FF\right)_\parallel.
        &\qquad\longleftrightarrow \eqref{eq:BCtgstress}
    \end{aligned}\]
\end{remark}

\begin{proof}[of \cref{lmm:GG}]
    We first observe that the following holds:
    \begin{equation}\label{eq:Gortog}\begin{gathered}
    \braket{\GG_\star,\star_h}_\spHstar = 0 \qquad\forall\star_h\in\hspHstar \qquad (\text{with }\star=\vec{d},\vec{u},\vec{p}_J)\\
    \text{and in particular, }\braket{\GG_\star,\interp{\star}{\test{\star}}}_\spHstar = 0 \qquad\forall\test{\star}\in\spHstar.
    \end{gathered}\end{equation}
    Denoting by $\GG_\star^{\apchn{n}}$ the evaluation of $\GG_\star$ at time $t^{\apchn{n}}$, for each of the points 1-4, we first estimate the operator norm $\|\GG_\star^{\apchn{n}}\|_\DUALspHstar$ a.e.~in time and then we wrap it into the Bochner space norm appearing in the thesis.
    \begin{enumerate}
        \item
        We start by estimating the numerator of \begin{equation}\label{eq:Gdualnorm}
            \|\GG_{\vec{d}}^{\apchn{n}}\|_\DUALspHd \overset{\eqref{eq:Gortog}}{=} \sup\limits_{\test{\vec{d}}\in\spHd\setminus\{0\}} \frac{\braket{\GG_{\vec{d}},\errint{\vec{d}}{\test{\vec{d}}}}_\spHd}{\|\test{\vec{d}}\|_\spHd}.
        \end{equation}

        By the definition of $\GG_{\vec{d}}$ and element-wise integration by parts (i.b.p.), we obtain
        \begin{equation}\label{eq:lmmGeqpt1}\begin{aligned}
        &\braket{\GG_{\vec{d}}^{\apchn{n}},\errint{\vec{d}}{\test{\vec{d}}}}_\spHd
        \\&\qquad
        \overset{\text{\eqref{eq:G}}}{=}
        \left(\chn[\vec{d}]{n}{\vec{f}}, \errint{\vec{d}}{\test{\vec{d}}}\right)_{\Omega_\PP}
        - a_\PP(\chn{n}{\vec{d}},\errint{\vec{d}}{\test{\vec{d}}})
        \\&\qquad\qquad
        - b_J(\chn[J]{n}{\vec{p}},\errint{\vec{d}}{\test{\vec{d}}}) - \mathfrak J_\PP(\chn[\EE]{n}{p},\errint{\vec{d}}{\test{\vec{d}}})
        \\&\qquad
        \overset{\text{(i.b.p.)}}{=}
        \sum_{\elem\in\mesh{T}_\PP}\Biggl\{
        (
        \RR[\vec{d}]{n}{\elem}
        , \errint{\vec{d}}{\test{\vec{d}}})_\elem
        \\&\qquad
        \qquad\qquad\qquad+\frac{1}{2}\sum_{\substack{\face\in\mesh{F}_\PP^\II\\\face\subset\partial\elem}}
        \left(
        -\jump{\sigel(\chn{n}{\vec{d}})\vec{n}}_\face + 
        \cancel{\sum_{\jj\in J}\alpha_\jj\jump{\chn[\jj]{n}{p}\vec{n}_\face}_\face}
        , \errint{\vec{d}}{\test{\vec{d}}}\right)_\face
        \\&\qquad
        \qquad\qquad\qquad+\sum_{\substack{\face\in\mesh{F}_\Sigma\\\face\subset\partial\elem}}
        \left(
        \interffaceRR[\vec{d}]{n}{\face}
        , \errint{\vec{d}}{\test{\vec{d}}}\right)_\face
        \Biggr\},
        \end{aligned}\end{equation}
    where the terms with $\jump{\chn[\jj]{n}{p}\vec{n}_\face}_\face$ vanish because $\hspHjj\subset C^0(\Omega_\PP)$ for all $\jj\in J$.
        Using the Cauchy-Schwarz inequality on each term and multiplying/dividing by appropriate powers of $h_\elem$ yield
        \[\begin{aligned}
        \braket{\GG_{\vec{d}}^{\apchn{n}},\errint{\vec{d}}{\test{\vec{d}}}}_\spHd
        &
        \lesssim
        \sum_{\elem\in\mesh{T}_\PP}\Biggl\{
        h_\elem\|\RR[\vec{d}]{n}{\elem}\|_\elem
        \,h_\elem^{-1} \|\errint{\vec{d}}{\test{\vec{d}}}\|_\elem
        \\&\qquad
        \qquad\qquad
        +\sum_{\substack{\face\in\mesh{F}_\PP^\II\\\face\subset\partial\elem}}
        h_\elem^{1/2}\|\faceRR[\vec{d}]{n}{\face}\|_\face
        \,h_\elem^{-1/2} \|\errint{\vec{d}}{\test{\vec{d}}}\|_\face
        \\&\qquad
        \qquad\qquad
        +\sum_{\substack{\face\in\mesh{F}_\Sigma\\\face\subset\partial\elem}}
        h_\elem^{1/2}\|\interffaceRR[\vec{d}]{n}{\face}\|_\face
        \,h_\elem^{-1/2} \|\errint{\vec{d}}{\test{\vec{d}}}\|_\face
        \Biggr\}.
        \end{aligned}\]
        Hinging upon the discrete Cauchy-Schwarz inequality applied to the sums over the elements, as well as the inequality $(a+b)^2<\frac{1}{2}(a^2+b^2)$, we can prove that
    \begin{equation}\label{eq:dEstXi}\begin{aligned}
    &\braket{\GG_{\vec{d}}^{\apchn{n}},\errint{\vec{d}}{\test{\vec{d}}}}_\spHd
    ^2
    \\
    &\qquad\lesssim
        \left[\sum_{\elem\in\mesh{T}_\PP}
            h_\elem^2\|\RR[\vec{d}]{n}{\elem}\|_\elem^2
        \right]
        \left[\sum_{\elem\in\mesh{T}_\PP}
            h_\elem^{-2}\|\errint{\vec{d}}{\test{\vec{d}}}\|_\elem^2
        \right]
    \\&\qquad\qquad
        + \Biggl[\sum_{\elem\in\mesh{T}_\PP}
        \sum_{\substack{\face\in\mesh{F}_\PP^\II\\\face\in\partial\elem}}
            h_\elem\|\faceRR[\vec{d}]{n}{\face}\|_\face^2
        \Biggr]
        \Biggl[\sum_{\elem\in\mesh{T}_\PP}
            h_\elem^{-1}\|\errint{\vec{d}}{\test{\vec{d}}}\|_
            {\partial\elem}^2
        \Biggr]
    \\&\qquad\qquad
        + \Biggl[\sum_{\elem\in\mesh{T}_\PP}
        \sum_{\substack{\face\in\mesh{F}_\Sigma\\\face\in\partial\elem}}
            h_\elem\|\interffaceRR[\vec{d}]{n}{\face}\|_\face^2
        \Biggr]
        \Biggl[\sum_{\elem\in\mesh{T}_\PP}
            h_\elem^{-1}\|\errint{\vec{d}}{\test{\vec{d}}}\|_
            {\partial\elem}^2
        \Biggr].
    \end{aligned}\end{equation}
    Then, using interpolation estimates \eqref{eq:estInterp} yields
    \[\begin{aligned}
        \braket{\GG_{\vec{d}}^{\apchn{n}},\errint{\vec{d}}{\test{\vec{d}}}}_\spHd
        &\lesssim
        \Biggl\{\sum_{\elem\in\mesh{T}_\PP}
        h_\elem^2\|\RR[\vec{d}]{n}{\elem}\|_\elem^2
        +\sum_{\substack{\face\in\mesh{F}_\PP^\II\\\face\subset\partial\elem}}
        h_\elem\|\faceRR[\vec{d}]{n}{\face}\|_\face^2
        \\&\qquad
        +\sum_{\substack{\face\in\mesh{F}_\Sigma\\\face\subset\partial\elem}}
        h_\elem\|\interffaceRR[\vec{d}]{n}{\face}\|_\face^2
        \Biggr\}^{\frac{1}{2}}
        \|\test{\vec{d}}\|_\spHd.
    \end{aligned}\]
    Now, using \eqref{eq:Gdualnorm}, we obtain $\|\GG_{\vec{d}}^{\apchn{n}}\|_\DUALspHd\lesssim\left(\estim_{\vec{d}}^{\apchn{n}}\right)^{\frac{1}{2}}$, which yields point 1 of the thesis after taking the supremum w.r.t.~time at both members and noticing that each $\estim_{\vec{d}}^{\apchn{n}}$ is constant over $(t^{\apchn{nmm}},t^{\apchn{n}})$.
    \item
    Being $\GG_{\vec{d}}$ piecewise linear in time, $\partial_t\GG_{\vec{d}}|_{(t^{\apchn{nmm}},t^{\apchn{n}})} = \discrdt{n}\GG_{\vec{d}}$ is constant on each time interval $(t^{\apchn{nmm}},t^{\apchn{n}}]$.
    Employing twice equality \eqref{eq:lmmGeqpt1}, for $\apchn{n}$ and $\apchn{nmm}$,
    yields
            \begin{equation}\label{eq:lmmGeqpt2}\begin{aligned}
        &\braket{\discrdt{n}\GG_{\vec{d}}^{\apchn{n}},\errint{\vec{d}}{\test{\vec{d}}}}_\spHd
        \\&\qquad
        =
        \sum_{\elem\in\mesh{T}_\PP}\Biggl\{
        (
        \discrdt{n}\RR[\vec{d}]{n}{\elem}
        , \errint{\vec{d}}{\test{\vec{d}}})_\elem
        \\&\qquad
        \qquad\qquad\qquad+\frac{1}{2}\sum_{\substack{\face\in\mesh{F}_\PP^\II\\\face\subset\partial\elem}}
        \left(
        \discrdt{n}\faceRR[\vec{d}]{n}{\face}
        , \errint{\vec{d}}{\test{\vec{d}}}\right)_\face
        \\&\qquad
        \qquad\qquad\qquad+\sum_{\substack{\face\in\mesh{F}_\Sigma\\\face\subset\partial\elem}}
        \left(
        \discrdt{n}\interffaceRR[\vec{d}]{n}{\face}
        , \errint{\vec{d}}{\test{\vec{d}}}\right)_\face
        \Biggr\}.
        \end{aligned}\end{equation}
    Then, proceeding in the same way as in point 1 of this proof, we can obtain
    \[
    \|\partial_t\GG_{\vec{d}}\|_\DUALspHd|_{t=t^{\apchn{n}}} \lesssim \left(\estim_{\vec{d}}^{\apchn{n}}(\partial_t)\right)^{\frac{1}{2}},
    \]
    from which summing over all the intervals $(t^{\apchn{nmm}},t^{\apchn{n}}]$ yields
    \[
    \|\partial_t\GG_{\vec{d}}\|_{\Lone{\DUALspHd}{T}} = \sum_{n=1}^{N_T} \Delta t\,\|\partial_t\GG_{\vec{d}}\|_\DUALspHd|_{t=t^{\apchn{n}}} \lesssim \sum_{n=1}^{N_T} \Delta t\,\left(\estim_{\vec{d}}^{\apchn{n}}(\partial_t)\right)^{\frac{1}{2}}.
    \]
    \item
    Recalling that, for each function $\chn{h}{\phi}$ that is piecewise linear in time,
    \linebreak[4]
    $\pi^0\chn{h}{\phi}|_{t=t^{\apchn{n}}} = \chn{n}{\phi}$ and $\partial_t\chn{h}{\phi}|_{t=t^{\apchn{n}}} = \discrdt{n}\chn{h}{\phi}$, we can proceed as in point 1 of this proof and show that
    \begin{equation}\label{eq:lmmGeqpt3}\begin{aligned}
        &\braket{\GG_J^{\apchn{n}},\errint{J}{\test{\vec{p}}_J}}_\spHJ
        \\
        &\qquad\overset{\text{\eqref{eq:G}}}{=}
        \left(\pi^0\chn[J]{h}{\vec{g}}|_{t=t^{\apchn{n}}}, \errint{J}{\test{\vec{p}}_J}\right)_{\Omega_\PP}
        - m_J(\partial_t\chn[J]{h}{\vec{p}}|_{t=t^{\apchn{n}}},\errint{J}{\test{\vec{p}}_J})
        \\
        &\qquad\qquad
        - \widetilde{a}_J(\pi^0\chn[J]{h}{\vec{p}}|_{t=t^{\apchn{n}}},\errint{J}{\test{\vec{p}}_J})
        + b_J(\errint{J}{\test{\vec{p}}_J}, \partial_t\chn{h}{\vec{d}}|_{t=t^{\apchn{n}}})
        \\
        &\qquad\qquad
        + \mathfrak J_\PP(\errint{J}{\test{\vec{p}}_J}, \partial_t\chn{h}{\vec{d}}|_{t=t^{\apchn{n}}})
        + \mathfrak J_\FF(\errint{J}{\test{\vec{p}}_J}, \chn{n}{\vec{u}})
        \\
        &\qquad\overset{\text{(i.b.p.)}}{=}
        \sum_{\elem\in\mesh{T}_\PP}\Biggl\{
        \left(\chn[J]{n}{\vec{g}}
        - M_{J,\elem}\discrdt{n}\chn[J]{h}{\vec{p}}
        - \widetilde{A}_{J,\elem}\chn[J]{n}{\vec{p}}
        + B_{\PP,\elem}\discrdt{n}\chn{h}{\vec{d}}
        , \errint{J}{\test{\vec{p}}_J}\right)_\elem
        \\
        &\qquad\qquad
        +\frac{1}{2}\sum_{\substack{\face\in\mesh{F}_\PP^\II\\\face\subset\partial\elem}}
        \left(\sum_{\jj\in J}\faceRR[\jj]{n}{\face}
        , \errint{J}{\test{\vec{p}}_J}\right)_\face
        +\sum_{\substack{\face\in\mesh{F}_\Sigma\\\face\subset\partial\elem}}
        \left(\sum_{\jj\in J}\interffaceRR[J]{n}{\face}
        , \errint{J}{\test{\vec{p}}_J}\right)_\face
        \Biggr\},
    \end{aligned}\end{equation}
    where
    \[\begin{gathered}
    M_{J,\elem}\discrdt{n}\chn[J]{h}{\vec{p}} = \begin{bmatrix}
    c_\jj\discrdt{n}\chn[\jj]{h}{p}
    \end{bmatrix}_{\jj\in J},
    \qquad
    B_{\PP,\elem}\discrdt{n}\chn{h}{\vec{d}} = \begin{bmatrix}
    -\Div{(\alpha_\jj\discrdt{n}\chn{h}{\vec{d}}|_\elem)}
    \end{bmatrix}_{\jj\in J},
    \\
    \widetilde{A}_{J,\elem}\chn[J]{n}{\vec{p}} = \begin{bmatrix}
    -\Div{\left(\frac{\kappa_\jj}{\mu_\jj}\nabla\chn[\jj]{n}{p}|_\elem\right)}
    + \beta_\jj^\text{e}\chn[\jj]{n}{p} + \sum_{\ell\in J}\beta_{\jj\ell}(\chn[\jj]{n}{p} - \chn[\ell]{n}{p})
    \end{bmatrix}_{\jj\in J}.
    \end{gathered}\]
    Then, we employ the definition of $\|\GG_J^{\apchn{n}}\|_{\DUALspHJ}$ on the left-hand side of \eqref{eq:lmmGeqpt3} and the interpolation estimates \eqref{eq:estInterp} together with the Cauchy-Schwarz inequality on the right-hand side, thus obtaining $\|\GG_J^{\apchn{n}}\|_\DUALspHJ^2 \lesssim \estim_J^{\apchn{n}}$.
    Again, the second member of this inequality is piecewise constant in time, therefore we can obtain the desired result.
    \item
   Proceeding as in point 1, by definition \eqref{eq:G} of $\GG_{\vec{u}}$ and $\GG_p$, and using the continuous problem \eqref{eq:weak-stokesMom}-\eqref{eq:weak-stokesCont}, we have
    \begin{equation}\label{eq:lmmGeqpt4}\begin{aligned}
        &\braket{(\GG_{\vec{u}}^{\apchn{n}},\GG_p^{\apchn{n}}),(\errint{\vec{u}}{\test{\vec{u}}}, \test{p})}_{\spHu\times\spLp}
        \\
        &\quad\overset{\text{\eqref{eq:G}}}{=}
        \left(\chn[\vec{u}]{n}{\vec{f}}, \errint{\vec{u}}{\test{\vec{u}}}\right)_{\Omega_\FF}
        - \LLstokes([\chn{n}{\vec{u}}, \chn{n}{p}],[\errint{\vec{u}}{\test{\vec{u}}}, \test{p}])
        \\
        &\qquad
        - \mathfrak J_\FF(\chn[\EE]{n}{p}, \errint{\vec{u}}{\test{\vec{u}}})
        \\
        &\overset{\text{(i.b.p.)}}{=}
        \sum_{\elem\in\mesh{T}_\FF}\Biggl\{
        (
        \RR[\vec{u}]{n}{\elem}
        , \errint{\vec{u}}{\test{\vec{u}}})_\elem
        - (\Div{\chn{n}{\vec{u}}}, \test{p})_\elem
        \\&\qquad
        \qquad\qquad+\frac{1}{2}\sum_{\substack{\face\in\mesh{F}_\FF^\II\\\face\subset\partial\elem}}
        \left(
        -\jump{\tauf(\chn{n}{\vec{u}})\vec{n}}_\face + \cancel{\jump{\chn{n}{p}\vec{n}_\face}_\face}
        , \errint{\vec{u}}{\test{\vec{u}}}\right)_\face
        \\&\qquad
        \qquad\qquad+\sum_{\substack{\face\in\mesh{F}_\Sigma\\\face\subset\partial\elem}}
        \left(
        \interffaceRR[\vec{u}]{n}{\face}
        , \errint{\vec{u}}{\test{\vec{u}}}\right)_\face
        \Biggr\}
    \end{aligned}\end{equation}
    where $\jump{\chn{n}{p}\vec{n}_\face}_\face$ vanishes because $\hspLp\subset C^0(\Omega_\FF)$.
    Now, as in point 1, we use the definition of $\|(\GG_{\vec{u}}^{\apchn{n}},\GG_p^{\apchn{n}})\|_{\DUALspHu\times\spLp}$ on the left-hand side of \eqref{eq:lmmGeqpt4}, we apply integral and discrete Cauchy-Schwarz inequalities to the right-hand side, as well as the interpolation estimates for the terms tested against $\errint{\vec{u}}{\test{\vec{u}}}$, thus obtaining 
    \[\begin{aligned}
        &\braket{(\GG_{\vec{u}}^{\apchn{n}},\GG_p^{\apchn{n}}),(\errint{\vec{u}}{\test{\vec{u}}}, \test{p})}_{\spHu\times\spLp}
        \\&\qquad
        \lesssim
        \Biggl\{\sum_{\elem\in\mesh{T}_\FF}
        h_\elem^2\|\RR[\vec{u}]{n}{\elem}\|_\elem^2
        +\|\Div{\chn{n}{\vec{u}}}\|_\elem^2
        \\&\qquad\quad
        +\sum_{\substack{\face\in\mesh{F}_\FF^\II\\\face\subset\partial\elem}}
        h_\elem\|\faceRR[\vec{u}]{n}{\face}\|_\face^2
        +\sum_{\substack{\face\in\mesh{F}_\Sigma\\\face\subset\partial\elem}}
        h_\elem\|\interffaceRR[\vec{u}]{n}{\face}\|_\face^2
        \Biggr\}^{\frac{1}{2}}
        \|(\test{\vec{u}},\test{p})\|_{\spHu\times\spLp}.
    \end{aligned}\]
    Therefore, $\|(\GG_{\vec{u}}^{\apchn{n}},\GG_p^{\apchn{n}})\|_{\DUALspHJ\times\spLp}^2
    \lesssim \estim_{\vec{u}p}^{\apchn{n}}$
    and we conclude the proof by integrating both sides of the inequality w.r.t.~time and noticing that the right-hand side is piecewise constant.
\end{enumerate}
\end{proof}

\subsection{\Apost{} error estimates}

We are now ready to combine the results of the previous sections to prove the following \apost{} error estimate:
\begin{theorem}
\label{th:final}
    Under the assumptions of \cref{th:preliminary}, the following \apost{} error estimates hold:
    \begin{equation}\label{eq:aposteriori}\begin{aligned}
    \|\err{c}{\vec{d}}   &   \|_{\Linf{\spHd}{T}}^2
    + \|\err{c}{J}\|_{\Linf{\spLJ}{T}}^2 + \|\err{c}{\vec{u}}\|_{\Ltwo{\spHu}{T}}^2 + \|\err{c}{J}\|_{\Ltwo{\spHJ}{T}}^2 
    \\
    &    + \|\errRR{c}{\vec{d}}(\partial_t)\|_{\Lone{\DUALspHd}{T}}^2 + \|\errRR{c}{J}\|_{\Ltwo{\DUALspHJ}{T}}^2 + \|\errRR{c}{\vec{u}}\|_{\Ltwo{\DUALspHu}{T}}^2
    \\&
    + \|\Div{\err{c}{\vec{u}}}\|_{\Ltwo{\spLp}{T}}^2
    \lesssim
    \estdata{T} + \esttime{T} + \estok{T},
    \end{aligned}\end{equation}
    where
    \[\begin{aligned}
    \errRR{c}{\vec{d}}(\partial_t)
        &
        = A_{\vec{d}}\partial_t\err{c}{\vec{d}} + B_\PP^*\partial_t\err{c}{J}
        := -\Div{\sigel(\partial_t\err{c}{\vec{d}})} + \sum_{\jj\in J}\alpha_\jj\nabla\partial_t\err{c}{J},
    \\
    \errRR{c}{J}
        &
        = M_J\partial_t\err{c}{J} + B_\PP\partial_t\err{c}{\vec{d}} + A_J\err{c}{J} + \widetilde{A}_J\err{c}{J}
        \\ &
        := \begin{bmatrix}
        \displaystyle
        c_\jj\partial_t\err{c}{\jj} + \Div{(\alpha_\jj\partial_t\err{c}{\vec{d}})} - \Div{\left(\frac{\kappa_\jj}{\mu_\jj}\nabla\err{c}{J}\right)} + \left(\beta_\jj^\text{e}\err{c}{\jj} + \sum_{\ell\in J}\beta_{\jj\ell}(\err{c}{\jj}-\err{c}{\ell})\right)
        \end{bmatrix}_{\jj\in J},
    \\
    \errRR{c}{\vec{u}}
        &
        = A_{\vec{u}}\err{c}{\vec{u}} + B_\FF^*\err{c}{p}
        := -\Div{\tauf(\err{c}{\vec{u}})} + \nabla\err{c}{p},
    \\
    \estok{T}
        &
        := \estim_{\vec{d}}^{N_T} + \estim_{\vec{d}}(\partial_t)^{N_T} + \estim_J^{N_T} + \estim_{\vec{u}p}^{N_T}
        \qquad\text{(with each term defined in \cref{lmm:GG})}
    \\
    \text{and }&\estdata{T}\text{ and }\esttime{T}\text{ are the same of \eqref{eq:estimatorsTh1}.}
    \end{aligned}\]
\end{theorem}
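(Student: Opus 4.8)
The plan is to start from the preliminary bound \eqref{eq:preliminaryEstimate} of \cref{th:preliminary}, whose right-hand side already contains the two computable estimators $\estdata{n},\esttime{n}$ together with the non-computable quantity $\estpre{n}$, and to reduce $\estpre{n}$ to the residual estimators of \cref{lmm:GG}. First I would bound each duality pairing in \eqref{eq:estimatorsTh1} by Cauchy--Schwarz in space and, for the time integrals, by the H\"older inequality in the Bochner norms, factoring it into a product of a residual dual norm and an error norm: for instance $\int_0^{t^{\apchn{n}}}\braket{\GG_J,\err{c}{J}}_\spHJ\,ds \le \|\GG_J\|_{\Ltwo{\DUALspHJ}{n}}\|\err{c}{J}\|_{\Ltwo{\spHJ}{n}}$, and analogously for $\GG_{\vec{u}}$; the pairing $\braket{\GG_{\vec{d}}^{\apchn{n}},\err{n}{\vec{d}}}_\spHd$ is controlled by $\|\GG_{\vec{d}}\|_{\Linf{\DUALspHd}{n}}\|\err{c}{\vec{d}}\|_{\Linf{\spHd}{n}}$, and the time integral $\int_0^{t^{\apchn{n}}}\braket{\partial_t\GG_{\vec{d}},\err{c}{\vec{d}}}_\spHd\,ds$ by the $L^1$--$L^\infty$ pairing $\|\partial_t\GG_{\vec{d}}\|_{\Lone{\DUALspHd}{n}}\|\err{c}{\vec{d}}\|_{\Linf{\spHd}{n}}$. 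I would then apply Young's inequality to each product, sending the error factor --- which in every case coincides with one of the norms already on the left-hand side of \eqref{eq:preliminaryEstimate} --- to the left with a small weight $\varepsilon$, and leaving the squared residual dual norms on the right. Since the hidden constants do not depend on $n$, a small enough $\varepsilon$ absorbs the error contributions, and the residual dual norms are replaced by $\estim_{\vec{d}},\estim_{\vec{d}}(\partial_t),\estim_J,\estim_{\vec{u}p}$ through \cref{lmm:GG}; the initial pairing $\braket{\GG_{\vec{d}}^0,\err{0}{\vec{d}}}_\spHd$ is treated likewise, with $\err{0}{\vec{d}}=\vec{d}_0-\vec{d}_{0,h}$ absorbed into $\estdata{n}$.

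I expect the main obstacle in the term $\int_0^{t^{\apchn{n}}}\braket{\GG_p,\err{c}{p}}_\spLp\,ds$, because the left-hand side of \eqref{eq:preliminaryEstimate} does not control the Stokes pressure error $\err{c}{p}$: choosing $\test{p}=\err{c}{p}$ in \cref{th:preliminary} was precisely what made the coupling terms cancel, so no $\|\err{c}{p}\|_\spLp$ norm is available for a direct Young absorption. The resolution is the Stokes inf-sup stability of \cref{prpstn:contcoerc}(5): applied to $(\err{c}{\vec{u}},\err{c}{p})$ and combined with the momentum and continuity components of the error system \eqref{eq:pberr}, it yields, pointwise in time, $\|\err{c}{p}\|_\spLp \lesssim \|\GG_{\vec{u}}\|_\DUALspHu + \|\GG_p\|_\spLp + \|\err{c}{\EE}\|_\spHjj + \|\vec{f}_\FF-\chn[\vec{u}]{h}{\vec{f}}\|_\DUALspHu$, where the interface contribution $\mathfrak J_\FF(\err{c}{\EE},\cdot)$ has been bounded via a trace inequality by the norm of $\err{c}{\EE}$, itself a component of $\err{c}{J}$. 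Integrating this in $\Ltwo{\cdot}{n}$ and inserting it into $\|\GG_p\|_{\Ltwo{\spLp}{n}}\|\err{c}{p}\|_{\Ltwo{\spLp}{n}}$ produces, after Young, only norms that are either on the left-hand side of \eqref{eq:preliminaryEstimate} (namely $\|\err{c}{J}\|_{\Ltwo{\spHJ}{n}}$, hence absorbable) or controlled by $\estim_{\vec{u}p}$ and $\estdata{n}$.

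Finally, I would account for the three extra residual norms and the divergence norm on the left-hand side of \eqref{eq:aposteriori}, which are absent from \eqref{eq:preliminaryEstimate}. The key observation is that, since the exact solution satisfies \eqref{eq:NSMPE} and the interface conditions \eqref{eq:interf} strongly, each strong-form error residual coincides with the corresponding consistency operator up to data (and, for the pressures, time) oscillation: concretely, as elements of the relevant dual spaces, $\errRR{c}{\vec{u}} = \GG_{\vec{u}} + (\vec{f}_\FF-\chn[\vec{u}]{h}{\vec{f}})$, $\errRR{c}{\vec{d}}(\partial_t) = \partial_t\GG_{\vec{d}} + \partial_t(\vec{f}_{\vec{d}}-\chn[\vec{d}]{h}{\vec{f}})$, while $\errRR{c}{J}$ differs from $\GG_J$ only by the source oscillation $\vec{g}_J-\pi^0\chn[J]{h}{\vec{g}}$ and the time oscillation $\chn[J]{h}{\vec{p}}-\pi^0\chn[J]{h}{\vec{p}}$. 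These identities are verified by the same element-wise integration by parts used in \cref{lmm:GG}, the bulk, facet, and interface residuals being identical by construction. Taking dual norms and invoking \cref{lmm:GG} then bounds $\|\errRR{c}{\vec{d}}(\partial_t)\|_{\Lone{\DUALspHd}{T}}^2, \|\errRR{c}{J}\|_{\Ltwo{\DUALspHJ}{T}}^2, \|\errRR{c}{\vec{u}}\|_{\Ltwo{\DUALspHu}{T}}^2$ by $\estim_{\vec{d}}(\partial_t), \estim_J, \estim_{\vec{u}p}$ plus oscillation terms already contained in $\estdata{T}$ and $\esttime{T}$, whereas $\|\Div{\err{c}{\vec{u}}}\|_{\Ltwo{\spLp}{T}}^2 = \|\GG_p\|_{\Ltwo{\spLp}{T}}^2$ is immediately part of $\estim_{\vec{u}p}$ since $\Div{\err{c}{\vec{u}}} = -\Div{\chn{h}{\vec{u}}} = \GG_p$ by the exact incompressibility $\Div{\vec{u}}=0$. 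Adding these bounds to the estimate obtained above, taking the supremum over $n$ for the $L^\infty$-in-time norms and setting $n=N_T$ for the remaining ones, collects the right-hand side into $\estdata{T}+\esttime{T}+\estok{T}$ and concludes the proof.
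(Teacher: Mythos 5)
Your proposal is correct and follows essentially the same route as the paper: start from \cref{th:preliminary}, absorb the error factors appearing in $\estpre{T}$ via Cauchy--Schwarz/Young and \cref{lmm:GG}, and bound the extra strong-residual norms through the operator-form error identities $\errRR{c}{\vec{u}}=\vec{f}_{\vec{u}}-\chn[\vec{u}]{h}{\vec{f}}+\GG_{\vec{u}}$, $\errRR{c}{J}=\vec{g}_J-\pi^0\chn[J]{h}{\vec{g}}+\GG_J$ (up to the time-oscillation term you note), $\partial_t(A_{\vec{d}}\err{c}{\vec{d}}+B_\PP^*\err{c}{J})=\partial_t(\vec{f}_{\vec{d}}-\chn[\vec{d}]{h}{\vec{f}}+\GG_{\vec{d}})$, and $\Div{\err{c}{\vec{u}}}=\GG_p$. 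In fact you are more explicit than the paper's own terse argument (which simply asserts $\estpre{T}\lesssim\estok{T}$) on the one genuinely delicate point, namely that the pairing $\int_0^{T}\braket{\GG_p,\err{c}{p}}_\spLp\,ds$ requires the Stokes inf-sup condition of \cref{prpstn:contcoerc} to control $\|\err{c}{p}\|_\spLp$, which is absent from the left-hand side of \eqref{eq:preliminaryEstimate}.
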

\begin{proof}
We proceed taking inspiration from \cite{ern2009posteriori,eliseussen2023posteriori,verfurth1989posteriori}, in which inequalities like the one in \cref{th:preliminary} are combined with \apost{} estimates of the consistency operators like those of \cref{lmm:GG}.
    We start observing that \Cref{th:preliminary} implies
    \[\begin{aligned}
    \|\err{c}{\vec{d}}   &   \|_{\Linf{\spHd}{T}}^2
    + \|\err{c}{J}\|_{\Linf{\spLJ}{T}}^2 + \|\err{c}{\vec{u}}\|_{\Ltwo{\spHu}{T}}^2 + \|\err{c}{J}\|_{\Ltwo{\spHJ}{T}}^2 
    \\
    &\quad
    + \|\errRR{c}{\vec{d}}(\partial_t)\|_{\Lone{\DUALspHd}{T}}^2 + \|\errRR{c}{J}\|_{\Ltwo{\DUALspHJ}{T}}^2 + \|\errRR{c}{\vec{u}}\|_{\Ltwo{\DUALspHu}{T}}^2 + \|\Div{\err{c}{\vec{u}}}\|_{\Ltwo{\spLp}{T}}^2
    \\
    &\lesssim
    \estdata{T} + \esttime{T} + \estpre{T}
    \\
    &\quad
    + \|\errRR{c}{\vec{d}}(\partial_t)\|_{\Lone{\DUALspHd}{T}}^2 + \|\errRR{c}{J}\|_{\Ltwo{\DUALspHJ}{T}}^2 + \|\errRR{c}{\vec{u}}\|_{\Ltwo{\DUALspHu}{T}}^2 + \|\Div{\err{c}{\vec{u}}}\|_{\Ltwo{\spLp}{T}}^2.
    \end{aligned}\]
    Moreover, a combination of \cref{lmm:GG} and the Young inequality yields $\estpre{T}\lesssim\estok{T}$.
    To estimate the terms involving $\errRR{c}{\star},\star=\vec{d},J,\vec{u}$ and $\Div{\err{c}{\vec{u}}}$, we rely on the following error equations in operator form (holding a.e.~in time), that can be obtained by a subtraction of the discrete problem \eqref{eq:discr} from the continuous one \eqref{eq:weak}:
    \begin{subequations}\begin{align}
        A_{\vec{d}}\err{c}{\vec{d}} + B_\PP^*\err{c}{J}
        &= \vec{f}_{\vec{d}} - \chn[\vec{d}]{h}{\vec{f}} + \GG_{\vec{d}}
            & \text{in }\DUALspHd,    \label{eq:proofThFinalPRERd}
        \\
        \errRR{c}{J} = M_J\partial_t\err{c}{J} + B_\PP\partial_t\err{c}{\vec{d}} + A_J\err{c}{J} + \widetilde{A}_J\err{c}{J}
        &= \vec{g}_J - \pi^0\chn[J]{h}{\vec{g}} + \GG_J
            & \text{in }\DUALspHJ,    \label{eq:proofThFinalRJ}
        \\
        \errRR{c}{\vec{u}} = A_{\vec{u}}\err{c}{\vec{u}} + B_\FF^*\err{c}{p}
        &= \vec{f}_{\vec{u}} - \chn[\vec{u}]{h}{\vec{f}} + \GG_{\vec{u}}
            & \text{in }\DUALspHu,    \label{eq:proofThFinalRu}
        \\
        \Div{\err{c}{\vec{u}}}
        &= \GG_p
            & \text{in }\spLp.    \label{eq:proofThFinalDiv}
    \end{align}\end{subequations}
    Now, computing the $L^2$ norm in time of both members of \eqref{eq:proofThFinalRJ}-\eqref{eq:proofThFinalRu}-\eqref{eq:proofThFinalDiv} and applying \cref{lmm:GG}, as well as recalling the definition \eqref{eq:estimatorsTh1} of $\estdata{n}$, we can prove the following estimates:
    \begin{align*}
        \|\errRR{c}{J}\|_{\Ltwo{\DUALspHJ}{T}}^2
        &\lesssim \estdata{T} + \estim_J,
        \\
        \|\errRR{c}{\vec{u}}\|_{\Ltwo{\DUALspHu}{T}}^2
        &\lesssim \estdata{T} + \estim_{\vec{u}p},
        \\
        \|\Div{\err{c}{\vec{u}}}\|_{\Ltwo{\spLp}{T}}^2
        &\lesssim \estim_{\vec{u}p}.
    \end{align*}
    Finally, to estimate $\|\errRR{c}{\vec{d}}(\partial_t)\|_{\Lone{\DUALspHd}{T}}$, we apply the time derivative $\partial_t$ to both sides of \eqref{eq:proofThFinalPRERd}, we compute the Bochner $L^1$-norm, and we apply again \cref{lmm:GG}:
    \begin{align*}
        \|\errRR{c}{\vec{d}}(\partial_t)\|_{\Lone{\DUALspHd}{T}}^2
        &= 
        \|\partial_t(A_{\vec{d}}\err{c}{\vec{d}} + B_\PP^*\err{c}{J})\|_{\Lone{\DUALspHd}{T}}^2
        \\&
        \lesssim
        \|\partial_t(\vec{f}_{\vec{d}} - \chn[\vec{d}]{h}{\vec{f}})\|_{\Lone{\DUALspHd}{T}}^2
            + \|\partial_t\GG_{\vec{d}}\|_{\Lone{\DUALspHd}{T}}^2
        \\
        &\lesssim \estdata{T} + \estim_{\vec{d}}(\partial_t).
    \end{align*}
    This concludes the proof.
\end{proof}

\section{Numerical results}\label{sec:results}

Bearing in mind the application that motivates the present work, namely brain multiphysics flow modeling, the geometrical complexity of the domain asks for  adaptive mesh refinement strategies, whereas the benefit of adaptivity in time would be limited by the relatively slow and regular flow regime \cite{baledent2001cerebrospinal,chou2016fully,guo2018subject}.
For this reason, in this section we verify the estimates of \cref{th:final} and assess the efficiency of the estimators with respect to $h$ refinement, whereas $\Delta t$ is chosen sufficiently small not to hinder the convergence order.

We consider problem \eqref{eq:NSMPE} in $\dimens=2$ dimensions with 1 compartment $J=\{\EE\}$.
In the domain $\Omega=\Omega_\PP\cup\Omega_\FF=(-0.5,0)\times(0,0.5)\cup(0,0.5)\times(0,0.5)$, with interface $\Sigma=\{0\}\times(0,0.5)$, the following
is a solution of \eqref{eq:NSMPE} for proper values of the source functions $\vec{f}_\PP, \vec{g}_\EE,\vec{f}_\FF$:
\begin{align*}
\vec{d}(t,\vec{x}) &= \left(\cos(\eta t)-\sin(\eta t)\right)
\pi\frac{\kappa}{\eta}
\left(
    \sin(\pi x)\sin(\pi y) - \cos(\pi x)\cos(\pi y)
\right)
\begin{bmatrix}
1 \\ -1
\end{bmatrix},
\\
p_\EE(t,\vec{x}) &= -\left(\cos(\eta t)-\sin(\eta t)\right)\left(\pi x\cos(\pi y)+2\pi^2\mu\frac{\kappa}{\mu_\PP}\sin(\pi y)\right),
\\
\vec{u}(t,\vec{x}) &= 2\cos(\eta t)
\pi\frac{\kappa}{\mu_\PP}
\left(
    \sin(\pi x)\sin(\pi y) - \cos(\pi x)\cos(\pi y)
\right)
\begin{bmatrix}
-1 \\ 1
\end{bmatrix},
\\
p(t,\vec{x}) &= -\left(1.5\cos(\eta t)-0.5\sin(\eta t)\right)\left(x\cos(\pi y)+4\pi^2\mu\frac{\kappa}{\mu_\PP}\sin(\pi y)\right),
\end{align*}
with $\eta = \frac{\mu_\PP}{\mu_\FF(1-\alpha)}$.
In particular, we enforce fully Dirichlet boundary conditions on $\partial\Omega$, namely $\Gamma_{\text{N},\vec{d}}=\Gamma_{\text{N},J}=\Gamma_{\text{N},\vec{u}}=\emptyset$.
We simulate the system for $T=\SI{5e-7}{\second}$ with $\Delta t=\SI{1e-7}{\second}$ and choose $s=1$ in the finite element spaces \eqref{eq:fe}, namely quadratic elements for $\vec{d},\vec{u},p_\EE$ and linear elements for the Stokes pressure $p$: this choice ensures that the properties of \cref{prpstn:contcoerc} hold true.
We set all physical parameters equal to 1,
except for $\alpha_\EE=0.5$.

In the following, we discuss the results for the error energy norm
\begin{equation}\label{eq:erresults}\begin{aligned}
\ERRnoR&:=\|\err{c}{\vec{d}}\|_{\Linf{\spHd}{T}}^2 + \|\err{c}{J}\|_{\Linf{\spLJ}{T}}^2 + \|\err{c}{\vec{u}}\|_{\Ltwo{\spHu}{T}}^2 + \|\err{c}{J}\|_{\Ltwo{\spHJ}{T}}^2
,
\end{aligned}\end{equation}
which includes all the physically relevant terms of \eqref{eq:aposteriori}.
For simplicity, we decide to neglect the term $\estdata{T}$ from the \apost{} error estimate \eqref{eq:aposteriori}.

\begin{figure}
    \centering
    \includegraphics[width=0.48\textwidth]
    {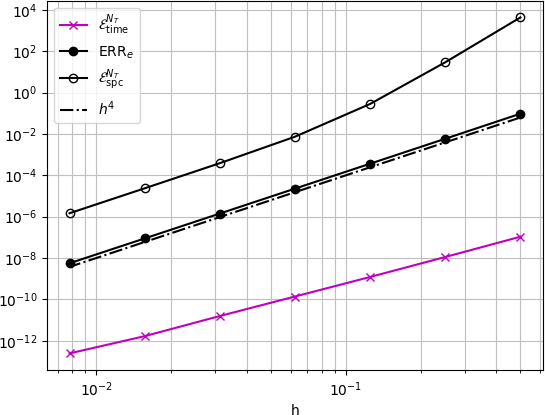}
    \hfill
    \includegraphics[width=0.48\textwidth]
    {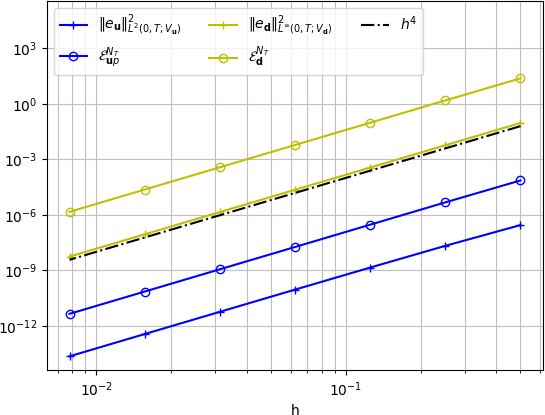}\\
    \caption{Convergence test w.r.t.~space discretization parameter $h$. Left: energy error norm $\ERRnoR$ and estimators $\protect\estok{T},\protect\esttime{T}$.
    Right: terms in the definitions of $\ERRnoR$ and $\protect\estok{T}$ (see \eqref{eq:erresults} and \cref{th:final}).
    }
    \label{fig:conv}
\end{figure}

In \cref{fig:conv} (left) we report the results of a convergence test for the error norms \eqref{eq:erresults} w.r.t.~$h$ and the corresponding values of the estimator $\estok{T}$.
The estimate \eqref{eq:aposteriori} of \cref{th:final} is verified by observing that $\ERRnoR\leq\estok{T}$ for all values of $h$.
Moreover, we notice the significantly small values obtained for the estimator $\esttime{T}$, due to the choice of a small value for $\Delta t$.
In \cref{fig:conv} (right) we analyze the contribution of the different terms entering in the space error estimator $\estok{T}$.
We notice that $\estim_{\vec{u}p}^{N_T}$ and $\estim_{\vec{d}}^{N_T}$ seem to be reliable estimators of $\|\err{c}{\vec{u}}\|_{\Ltwo{\spHu}{T}}^2$ and $\|\err{c}{\vec{d}}\|_{\Linf{\spHd}{T}}^2$, respectively.

\begin{figure}
    \centering
    \includegraphics[width=0.48\textwidth]
    {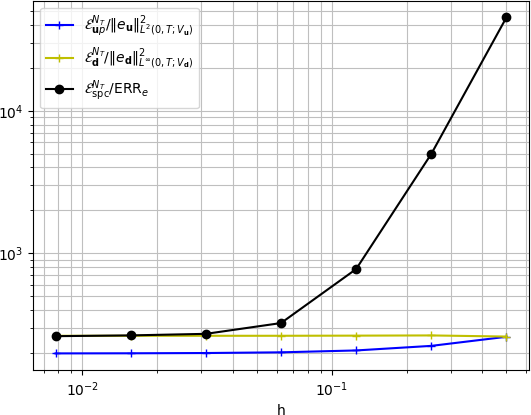}
    \caption{Efficiency indexes for the estimator $\protect\estok{T}$ and of its constituting terms, in the convergence test w.r.t.~$h$.
    }
    \label{fig:eff}
\end{figure}

To analyze the efficiency of the estimator, in \cref{fig:eff} we display the efficiency index $I_\text{eff}=\frac{\estok{T}}{\ERRnoR}$ and for completeness, even though not covered by our theory, we also report the analogous ratios for the single components of the error and estimator discussed above.
First, we notice that the estimator $\estok{T}$ is shown to be efficient for sufficiently small values of $h$. Similarly,  $\estim_{\vec{u}p}^{N_T}$ and $\estim_{\vec{d}}^{N_T}$ appear to be efficient estimators of the Stokes and elastic displacement errors, respectively.

\section{Conclusions}

In the present study, we have rigorously derived -- for the first time -- residual-based \apost{} error estimates for the finite element discretization of the coupled Stokes-MPE system.
The associated estimator controls both the error in the energy norm of the system and additional terms related to the strong residual of the equations, in dual norms.
We have verified the reliability and efficiency of the estimator by means of numerical experiments in a case with a manufactured solution.

The present work represents a first step towards the design of adaptive refinement algorithms and reduced order models for the efficient solution of fluid-poromechanics problems.
This is particularly relevant in the study of brain multiphysics flow, in which the complexity of the geometry and fluid-tissue interface may hinder the feasibility of numerical simulations under limited resources, and in multi-query problems like model calibration and validation against clinical data \cite{baledent2001cerebrospinal,pahlavian2018regional}.
Further extensions of the analysis presented in this work may also be considered in the case of advanced numerical methods based on general mesh elements, such as polytopal discontinuous Galerkin and virtual element methods, in which the geometrical flexibility of the numerical scheme makes it particularly suitable for local refinement strategies \cite{antonietti2016review,bassi2012flexibility,beirao2023adaptive,cangiani2017posteriori,canuto2023posteriori,fumagalli2024discontinuous}.

\section*{Acknowledgments}
IF and NP have been partially supported by ICSC–Centro Nazionale di Ricerca in High Performance Computing, Big Data, e Quantum Computing funded by the European Union--NextGenerationEU plan. 
MV has been partially funded by the European Union (ERC SyG, NEMESIS, project number 101115663).
The present research is part of the activities of ``Dipartimento di Eccellenza 2023-2027'', Dipartimento di Matematica, Politecnico di Milano.
All the authors are members of GNCS-INdAM
and IF acknowledges the support of the GNCS project CUP E53C23001670001.

\bibliography{main}

\end{document}